\newtheorem*{theorem*}{Theorem}
\newtheorem{theorem}{Theorem}[section]
\newtheorem*{lemma*}{Lemma}
\newtheorem{lemma}[theorem]{Lemma}
\newtheorem*{corollary*}{Corollary}
\newtheorem{corollary}[theorem]{Corollary}
\newtheorem*{conjecture*}{Conjecture}
\theoremstyle{definition}
\newtheorem{definition}[theorem]{Definition}
\newtheorem{example}[theorem]{Example}
\newcommand{\inner}[2]{\left\langle #1,#2 \right\rangle}
\newcommand{\til}{\widetilde}
\newcommand\C{\mathbb{C}}
\newcommand\Z{\mathbb{Z}}
\newcommand\R{\mathbb{R}}
\newcommand\Aut{\mathrm{Aut}}
\newcommand\inv{^{-1}}
\newcommand\id{\operatorname{id}}
\newcommand{\mx}[1]{\mathbf{#1}}
\title[IMGs of Chebyshev-like maps]
{Iterated monodromy groups of \\ Chebyshev-like maps on $\C^n$}
\author[J. Bowman]{Joshua P. Bowman}
\address{Natural Science Division, Seaver College, Pepperdine University}
\email{joshua.bowman@pepperdine.edu}
\date{\today}
\begin{document}

\begin{abstract}
Every affine Weyl group appears as the iterated monodromy 
group of a Chebyshev-like polynomial self-map of $\C^n$.
\end{abstract}

\maketitle

%

\section*{Introduction}

The Chebyshev polynomials $T_d : \C \to \C$, defined for 
$d \ge 2$ by the equation $T_d(\cos\theta) = \cos d\theta$, 
are important in many areas of mathematics. In the study of 
single-variable complex dynamics, they are especially 
notable for being post-critically finite, and for having 
a ``smooth'' Julia set, namely the interval $[-1,1]$. 
Moreover, their restrictions to $[-1,1]$ act as ``folding 
maps,'' whose dynamics can be completely described. 

In the 1980s, Veselov \cite{aV86} and Hoffman--Withers 
\cite{mHwW88} defined, for each root system $\Phi$ in 
$\R^n$, a family of ``Chebyshev-like'' polynomial 
maps $T_{\Phi,d} : \C^n \to \C^n$. (Certain special cases, 
particularly in dimension $2$, had been studied since the 
1970s \cite{tK74,kDrL82}, although not in a dynamical 
context.) These maps $T_{\Phi,d}$ are also post-critically 
finite (in an appropriate sense), and each of them acts as 
a ``folding map'' on a certain compact subset of $\C^n$, 
which depends only on $\Phi$.

Any post-critically finite map $f : \C^n \to \C^n$ has 
an associated iterated monodromy group $\mathrm{IMG}(f)$, 
which encodes the dynamics of $f$ algebraically, especially 
on (the boundary of) the set of points in $\C^n$ that 
do not escape to infinity under iteration of $f$. 
Iterated monodromy groups were introduced by Nekrashevych 
\cite{vN-img,vN-ssg,vN-sym} and have proved to be a powerful 
tool in both dynamics and group theory. However, very few 
such groups have been calculated for post-critically finite 
maps of $\C^n$ where $n > 1$; see \cite{jBsK10,vN-julia} 
for the only examples known to the author. (A special case 
of the present article's main result, obtained by 
different methods, is shown in \cite{jB21}.)

It is known that the iterated monodromy group of a 
Chebyshev polynomial $T_d$ is the infinite dihedral group 
$\langle a,b \mid a^2 = b^2 = \id \rangle$, which may be 
realized as the group of transformations of $\theta\in\C$ 
that leave the cosine $\frac12(e^{i\theta}+e^{-i\theta})$ 
invariant. In this article, we generalize this result to 
Chebyshev-like maps in every dimension $n \ge 1$. Given a root
system $\Phi \subset \mathbb{R}^n$, we let $\til{W}_\Phi$ 
denote the associated affine Weyl group.

\begin{theorem*}
Let $T_{\Phi,d} : \C^n \to \C^n$ be a Chebyshev-like 
map associated to the root system $\Phi$. Then 
$\mathrm{IMG}(T_{\Phi,d})$ is isomorphic to $\til{W}_\Phi$.
\end{theorem*}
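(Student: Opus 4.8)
The strategy is to compute $\mathrm{IMG}(T_{\Phi,d})$ directly from its definition as a quotient of the fundamental group of the complement of the post-critical set, using the classical description of $T_{\Phi,d}$ as a folding map. The key geometric input is the well-known factorization $T_{\Phi,d} \circ \pi = \pi \circ [d]$, where $\pi : \C^n \to \C^n$ is the map whose coordinates are the fundamental $W_\Phi$-invariant exponential functions on the torus $\R^n/Q^\vee$ (the "generalized cosine"), and $[d]$ is multiplication by $d$ on that torus. This exhibits $\pi$ as a branched covering with deck group the finite Weyl group $W_\Phi$ (acting on the torus together with the lattice translations — i.e. the quotient is really by $\til W_\Phi$ on $\R^n$, but $\pi$ factors through the torus), and it identifies the post-critical set $P$ of $T_{\Phi,d}$ with the image under $\pi$ of the union of reflection hyperplanes — equivalently, the branch locus of $\pi$, together with its forward iterates, which is already closed because the hyperplane arrangement is $[d]$-invariant.

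First I would set up notation for the root system: fix the coroot lattice $Q^\vee$, the reflection hyperplanes $H_{\alpha,k}$, the affine Weyl group $\til W_\Phi = W_\Phi \ltimes Q^\vee$, and recall (Bourbaki) that $\til W_\Phi$ acts properly discontinuously on $\R^n$ with fundamental domain an alcove, and that $\R^n$ minus the hyperplane arrangement is the universal cover of $\C^n \setminus P$ — more precisely, that the orbifold fundamental group of $(\C^n\setminus P)$, suitably interpreted, is $\til W_\Phi$, with the reflections in the walls of a fixed alcove as Coxeter generators. Second, I would recall the definition of $\mathrm{IMG}$: it is the quotient of $\pi_1(\C^n \setminus P, t)$ by the kernel of its action on the tree of preimages $\bigsqcup_n T_{\Phi,d}^{-n}(t)$. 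Third — the heart of the argument — I would show that this action is faithful modulo exactly the relations that cut $\pi_1(\C^n\setminus P)$ down to $\til W_\Phi$. Concretely, the tree of preimages of a basepoint $t=\pi(x_0)$ is naturally identified with $\bigsqcup_n [d]^{-n}(x_0)$ inside the torus, i.e. with $\bigcup_n \tfrac{1}{d^n}Q^\vee \big/ Q^\vee$ modulo the $W_\Phi$-action, and a loop in $\C^n\setminus P$ lifts through $\pi$ to a path in $\R^n$ minus the hyperplanes whose endpoints differ by an element of $\til W_\Phi$; the induced permutation of the $n$-th level is precisely the residual action of that $\til W_\Phi$-element on $\tfrac{1}{d^n}Q^\vee/Q^\vee$. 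So the iterated monodromy action factors through $\til W_\Phi$, and one must check it is faithful there — i.e. that a nontrivial element $w\in\til W_\Phi$ acts nontrivially on $\tfrac{1}{d^n}Q^\vee/Q^\vee$ for some $n$. For a translation this is clear once $d^n$ exceeds the relevant denominators; for a general $w = (w_0, \lambda)$ with $w_0\neq 1$, one uses that $w_0$ has a noncentral linear action on the fine lattice $\tfrac{1}{d^n}Q^\vee/Q^\vee$ for large $n$, possibly after checking a few small cases. Finally I would assemble these: the canonical surjection $\pi_1(\C^n\setminus P)\twoheadrightarrow\til W_\Phi$ has kernel contained in the kernel of the iterated monodromy action (from the factorization) and containing it (from faithfulness), so $\mathrm{IMG}(T_{\Phi,d})\cong\til W_\Phi$, and tracking the standard generators shows the isomorphism sends the monodromy around the branch locus to the Coxeter generators of $\til W_\Phi$.

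The main obstacle I anticipate is the faithfulness step, specifically controlling the $W_\Phi$-action (not just the translations) on the torsion subgroups $\tfrac{1}{d^n}Q^\vee/Q^\vee$: one needs that no nontrivial element of the \emph{finite} Weyl group acts as the identity on all of these, and — more delicately — that no nontrivial coset $w W_\Phi$-relation survives, i.e. that the semidirect product structure is genuinely detected level by level. This is essentially a statement that $\til W_\Phi$ acts faithfully on $\R^n$ (true) and that this faithfulness is witnessed already on the dense set of $d$-power-torsion points — which requires a uniformity argument over $n$, most cleanly phrased by noting that $\bigcap_n \mathrm{Fix}\big(w \text{ on } \tfrac{1}{d^n}Q^\vee/Q^\vee\big)$ being "everything" would force $w$ to fix a dense subset of the torus hence act trivially there, contradicting $w_0\neq 1$ (and the translation part is handled separately). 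A secondary technical point is making rigorous the identification of $\pi_1(\C^n\setminus P)$ with $\til W_\Phi$ via $\pi$: one must verify that $\pi$ restricted over $\C^n\setminus P$ is genuinely a covering map (no worse ramification hides in $P$) and compute its monodromy, which is where the structure theory of the $T_{\Phi,d}$ as folding maps, together with standard facts about complexified reflection arrangements and the Brieskorn/Deligne description of their fundamental groups, does the work.
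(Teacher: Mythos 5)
Your proposal is correct and follows essentially the same route as the paper: exploit the semiconjugacy $T_{\Phi,d}\circ\Psi_\Phi=\Psi_\Phi\circ m_d$, identify the restriction of $\Psi_\Phi$ to $\C^n\setminus\mathcal{H}_\Phi$ as a regular covering of $\C^n\setminus\mathcal{D}_\Phi$ with deck group $\til{W}_\Phi$, and show that the iterated monodromy action factors through $\til{W}_\Phi$ and is faithful there. The one genuine difference is the mechanism for faithfulness: the paper (Lemma~\ref{L:kernelsurj}) lifts a loop in $\ker\mu_{T_{\Phi,d}}$ to a path $\eta$ in $\C^n\setminus\mathcal{H}_\Phi$ and rescales by $\frac{1}{d^k}$ until both endpoints lie in a single fundamental domain, forcing $\eta$ to close up, whereas you argue that a nontrivial $\til{w}\in\til{W}_\Phi$ acting trivially on every level $\frac{1}{d^k}Q_\Phi^\vee/Q_\Phi^\vee$ would fix a dense set of $d$-power torsion points of the torus. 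Both work, and your density argument in fact needs no small-case checking beyond separating off the translation part (a point you correctly anticipate, since translations by $Q_\Phi^\vee$ act trivially on the torus but are detected on the levels once $d^k$ outgrows the lattice). Two corrections of detail: $\C^n\setminus\mathcal{H}_\Phi$ is \emph{not} the universal cover of $\C^n\setminus\mathcal{D}_\Phi$ --- complements of complexified arrangements have large fundamental groups, which is exactly why the paper only obtains $\pi_1(\C^n\setminus\mathcal{D}_\Phi)$ as an extension of $\til{W}_\Phi$ by $\pi_1(\C^n\setminus\mathcal{H}_\Phi)$ --- but your argument only ever uses the deck group, so nothing breaks; and the Brieskorn--Deligne theory you invoke at the end is not needed. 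Finally, the containment $\ker\bigl(\pi_1(\C^n\setminus\mathcal{D}_\Phi)\to\til{W}_\Phi\bigr)\subseteq\ker\mu_{T_{\Phi,d}}$, which you attribute simply to ``the factorization,'' does require a short lifting argument using the regularity of $\Psi_\Phi$ and the $\til{W}_\Phi$-invariance of each rescaled arrangement $\frac{1}{d^k}\mathcal{H}_\Phi$; this is the content of the paper's Lemma~\ref{L:kernelincl} and should be spelled out in a full write-up.
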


Our approach is somewhat indirect. Because $T_{\Phi,d}$ 
is post-critically finite, its post-critical locus 
is a (not necessarily irreducible) hypersurface 
$\mathcal{D}_\Phi \subset \C^n$, which we show depends 
only on $\Phi$. The iterated monodromy group 
$\mathrm{IMG}(T_{\Phi,d})$ is defined as a quotient of 
the fundamental group of the complement of $\mathcal{D}_\Phi$. 
However, we do not compute this fundamental group at the 
start; instead, we relate $\mathrm{IMG}(T_{\Phi,d})$ 
and $\pi_1(\C^n\setminus\mathcal{D}_\Phi)$ to the 
fundamental group of a certain complement of hyperplanes 
(the ``Cartan--Stiefel diagram,'' see \S\ref{S:critical}). 
Along the way, we uncover 
$\pi_1(\C^n\setminus\mathcal{D}_\Phi)$.

\begin{corollary*}
For every root system $\Phi$, there exists an infinite 
hyperplane arrangement $\mathcal{H}_\Phi\subset\C^n$, 
invariant under $\til{W}_\Phi$, such that 
$\pi_1(\C^n \setminus \mathcal{D}_\Phi)$ 
is isomorphic to an extension of $\til{W}_\Phi$ 
by $\pi_1(\C^n \setminus \mathcal{H}_\Phi)$.
\end{corollary*}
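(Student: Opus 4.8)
The plan is to exploit the quotient map $q : \C^n \to \C^n$ that realizes $T_{\Phi,d}$ as a "folding map." Recall that the Chebyshev-like map fits into a commutative diagram in which the covering $\C^n \setminus q^{-1}(\mathcal{D}_\Phi) \to \C^n \setminus \mathcal{D}_\Phi$ is the orbit map of the affine Weyl group action, and the generalized cosine $q$ is equivariant with respect to multiplication-by-$d$ on the torus side. The preimage $\mathcal{H}_\Phi := q^{-1}(\mathcal{D}_\Phi)$ is precisely the union of reflection hyperplanes of $\til{W}_\Phi$ — the Cartan--Stiefel diagram — together with the fact (to be established in \S\ref{S:critical}) that $\mathcal{D}_\Phi$ is exactly the branch locus of $q$. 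So first I would prove that $q$ restricts to an unramified Galois covering
\[
  \C^n \setminus \mathcal{H}_\Phi \;\longrightarrow\; \C^n \setminus \mathcal{D}_\Phi,
\]
with deck group $\til{W}_\Phi$; this requires checking that the ramification of $q$ occurs \emph{only} along $\mathcal{H}_\Phi$, i.e. that the regular orbits of $\til{W}_\Phi$ are exactly the points off the diagram, which is a standard fact about reflection group actions, and that $q$ is proper and surjective with these fibers.

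Given that covering, the corollary is immediate from the long exact (here, short exact) sequence of a regular covering: for a connected, locally nice Galois cover $E \to B$ with deck group $G$, one has
\[
  1 \;\longrightarrow\; \pi_1(E) \;\longrightarrow\; \pi_1(B) \;\longrightarrow\; G \;\longrightarrow\; 1.
\]
Applying this with $E = \C^n \setminus \mathcal{H}_\Phi$, $B = \C^n \setminus \mathcal{D}_\Phi$, and $G = \til{W}_\Phi$ exhibits $\pi_1(\C^n \setminus \mathcal{D}_\Phi)$ as an extension of $\til{W}_\Phi$ by $\pi_1(\C^n \setminus \mathcal{H}_\Phi)$, and the $\til{W}_\Phi$-invariance of $\mathcal{H}_\Phi$ is built into the construction since $\mathcal{H}_\Phi$ is a union of $\til{W}_\Phi$-orbits of hyperplanes. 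One should also note that $\C^n \setminus \mathcal{H}_\Phi$ is connected (the complement of a locally finite hyperplane arrangement in $\C^n$ is connected) and that $\C^n \setminus \mathcal{D}_\Phi$ is connected (it is the image of a connected set), so the sequence is genuinely a short exact sequence of groups with the stated leftmost and rightmost terms.

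The main obstacle is not the exact sequence but the geometric input feeding it: one must verify carefully that the branch locus of the generalized cosine map $q$ is \emph{precisely} $\mathcal{D}_\Phi$ — no more, no less — and that it coincides with the post-critical locus of $T_{\Phi,d}$ independently of $d$. Showing $\mathcal{D}_\Phi \subseteq \{\text{branch locus}\}$ amounts to computing the Jacobian of $q$ and identifying its vanishing with the Cartan--Stiefel diagram (a computation of Jacobians of generalized cosines, going back to the work of Hoffman--Withers and Veselov); showing the reverse inclusion requires knowing that off the diagram the $\til{W}_\Phi$-action is free, so that $q$ is a local biholomorphism there. This is exactly the content promised for \S\ref{S:critical}, so in the proof of the corollary I would simply cite that section for the identification $\mathcal{H}_\Phi = q^{-1}(\mathcal{D}_\Phi)$ and for the fact that $q|_{\C^n \setminus \mathcal{H}_\Phi}$ is a $\til{W}_\Phi$-Galois covering, and then the corollary follows formally. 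A secondary point to handle is properness of $q$ (needed so that the restriction over $\C^n \setminus \mathcal{D}_\Phi$ is a covering in the topological sense, not merely a local homeomorphism): this follows because $q$ extends to a finite morphism of affine varieties, being the quotient by a properly discontinuous group action whose coordinate ring is a finitely generated invariant ring.
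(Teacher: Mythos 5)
Your proposal is correct and follows essentially the same route as the paper: identify the generalized cosine restricted to the complement of the Cartan--Stiefel diagram as a regular ($\til{W}_\Phi$-Galois) covering of $\C^n\setminus\mathcal{D}_\Phi$ (the paper's Lemmas~\ref{L:Psi-critical} and~\ref{L:galoisgroup}), then apply the standard short exact sequence of a regular covering. The only cosmetic difference is that you define $\mathcal{H}_\Phi$ as $\Psi_\Phi^{-1}(\mathcal{D}_\Phi)$ and then identify it with the reflection hyperplanes, whereas the paper defines $\mathcal{H}_\Phi$ directly as the hyperplane arrangement and sets $\mathcal{D}_\Phi=\Psi_\Phi(\mathcal{H}_\Phi)$; these agree because the fibers of $\Psi_\Phi$ are exactly the $\til{W}_\Phi$-orbits.
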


Iterated monodromy groups are examples of 
self-similar groups acting on trees. Thus 
we also have the following consequence.

\begin{corollary*}
Any affine Weyl group of rank $n$ acts faithfully 
as a self-similar group on a rooted $d^n$-ary tree 
for any $d \ge 2$.
\end{corollary*}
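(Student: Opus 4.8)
The plan is to read the corollary off from the main Theorem together with the standard formalism of iterated monodromy groups, with essentially no new work. By the classification of affine Weyl groups, every affine Weyl group of rank $n$ has the form $\til{W}_\Phi$ for some (not necessarily irreducible) root system $\Phi \subset \R^n$ --- this is the correspondence already implicit in the abstract, and for a reducible decomposition $\Phi = \Phi_1 \sqcup \Phi_2$ one has $T_{\Phi,d} = T_{\Phi_1,d} \times T_{\Phi_2,d}$, so reducibility causes no difficulty. So I would fix such a $\Phi$, fix an integer $d \ge 2$, and consider the Chebyshev-like map $T_{\Phi,d} : \C^n \to \C^n$.

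As established earlier in the paper, $T_{\Phi,d}$ is post-critically finite with forward-invariant post-critical locus $\mathcal{D}_\Phi$, so it restricts to a covering map $\C^n \setminus T_{\Phi,d}\inv(\mathcal{D}_\Phi) \to \C^n \setminus \mathcal{D}_\Phi$; its topological degree is $d^n$, as one sees from the realization of $T_{\Phi,d}$ through the orbit map of $\til{W}_\Phi$, under which $T_{\Phi,d}$ is covered by a map of topological degree $d^n$ (multiplication by $d$ on an $n$-torus). Choosing a basepoint $t \in \C^n \setminus \mathcal{D}_\Phi$, the iterated preimages $\bigsqcup_{k \ge 0} T_{\Phi,d}^{-k}(t)$, with an edge from each point to its image, form a regular rooted $d^n$-ary tree $\mathcal{T}$. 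By Nekrashevych's construction \cite{vN-img,vN-ssg,vN-sym}, $\mathrm{IMG}(T_{\Phi,d})$ is the quotient of $\pi_1(\C^n \setminus \mathcal{D}_\Phi, t)$ by the kernel of its monodromy action on $\mathcal{T}$, so it acts \emph{faithfully} on $\mathcal{T}$, and after fixing an isomorphism of $\mathcal{T}$ with the abstract rooted $d^n$-ary tree this action is \emph{self-similar}. Since the Theorem identifies $\mathrm{IMG}(T_{\Phi,d})$ with $\til{W}_\Phi$, we obtain a faithful self-similar action of the given affine Weyl group on the rooted $d^n$-ary tree, for every $d \ge 2$.

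Because all of the analytic and group-theoretic substance is absorbed into the Theorem, the only things demanding care here are bookkeeping points: that every affine Weyl group of rank $n$ genuinely arises from a root system in $\R^n$ (so reducible systems must be permitted), that the topological degree of $T_{\Phi,d}$ is exactly $d^n$ rather than some other power of $d$, and that the basepoint can be taken off $\mathcal{D}_\Phi$ so that every iterated fiber has the full cardinality $d^n$ and $\mathcal{T}$ is genuinely regular. I would expect confirming the arity --- i.e.\ that the degree is precisely $d^n$, via the $n$-torus picture --- to be the one spot where a brief explicit argument is actually needed.
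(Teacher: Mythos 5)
Your proposal is correct and follows essentially the same route as the paper: the corollary is obtained by combining the main theorem with the built-in facts that an iterated monodromy group acts faithfully and self-similarly on the tree of preimages, together with the observation that $T_{\Phi,d}$ has topological degree $d^n$ (which the paper, like you, traces to the rank-$n$ construction, i.e.\ to multiplication by $d$ upstairs). Your extra remarks about reducible root systems and the product decomposition are harmless bookkeeping already covered by the paper's definition of a root system, which does not require irreducibility.
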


It is natural to conjecture that the property of having 
an iterated monodromy group isomorphic to an affine 
Weyl group characterizes the Chebyshev-like maps.

\begin{conjecture*}
Let $f : \C^n \to \C^n$ be post-critically finite. 
If the iterated monodromy group of $f$ is an affine 
Weyl group, then (some iterate of) $f$ is a Chebyshev-like map.
\end{conjecture*}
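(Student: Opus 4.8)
The plan is to prove a rigidity statement converse to the main theorem: the isomorphism type of $\IMG(f)$, together with post-critical finiteness, should pin down the geometry of the dynamics, and an affine Weyl group is rigid enough to force a Euclidean structure. Concretely, I would argue in two stages. First, show that if $\IMG(f)$ is the crystallographic group $\til{W}_\Phi$, then $f$ preserves a flat orbifold structure on $\C^n$ away from its post-critical hypersurface; this is the higher-dimensional analogue of the statement that a post-critically finite rational map with ``parabolic orbifold'' is a power map, a Chebyshev map, or a Latt\`es map. Second, classify all post-critically finite self-maps of $\C^n$ carrying such an invariant flat structure, and single out those whose holonomy is a reflection group; these should be exactly the Chebyshev-like maps, up to iterate and conformal conjugacy.

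For the first stage, recall that $\til{W}_\Phi$ sits in an exact sequence $1 \to \Z^n \to \til{W}_\Phi \to W_\Phi \to 1$ with $W_\Phi$ the finite Weyl group; in particular $\IMG(f)$ is virtually abelian, hence amenable and of polynomial growth. I would feed this into Nekrashevych's theory of contracting self-similar groups: the limit space of the self-similar group $\IMG(f)$ is identified with a quotient related to the Julia set of $f$, and severe restrictions on the growth of a contracting group are reflected in the geometry of its limit space. When the group is crystallographic, the limit space should be a closed flat orbifold, which forces $f$ to be affinely expanding with respect to a Euclidean metric off the post-critical locus. In dimension one the corresponding dichotomy — exponential-growth $\IMG$ versus virtually abelian $\IMG$ — is a consequence of Thurston's characterization of post-critically finite maps together with the known classification of parabolic-orbifold maps.

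For the second stage, a post-critically finite map carrying an invariant flat orbifold structure lifts, on a finite orbifold cover given by a complex torus $\C^n/\Lambda$, to an affine map $z \mapsto Az + b$; thus $f$ is a quotient of such a torus endomorphism by a finite group $G$ acting on $\C^n/\Lambda$, and the deck group of the universal cover of the regular part is the crystallographic group $\Lambda \rtimes G$. Requiring this to be isomorphic to $\til{W}_\Phi$ — equivalently, requiring the point group $G$ to be generated by reflections — forces $G$ to be conjugate to $W_\Phi$ acting on a $\til{W}_\Phi$-invariant lattice, so that $\C^n/\Lambda \to (\C^n/\Lambda)/G$ is exactly the ``angle coordinates $\to$ invariant characters'' map underlying $T_{\Phi,d}$, and hence $f$ is conformally conjugate to $T_{\Phi,d}$ for the appropriate $d$, with $d^n = \deg f$. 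The passage to an iterate absorbs the facts that $A$ need not be scalar a priori and that distinct values of $d$ produce the same iterated monodromy group, so it is only $f^k$ that need be literally Chebyshev-like.

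The main obstacle is the first stage. In dimension one it rests on Thurston rigidity, but post-critically finite endomorphisms of $\C^n$ with $n \ge 2$ are \emph{not} rigid in general, so one cannot simply quote a higher-dimensional Thurston theorem. The crux is to show that the specific hypothesis $\IMG(f) \cong \til{W}_\Phi$ — rather than merely ``$\IMG(f)$ amenable'' — already forces the self-similar biset of $f$ into the crystallographic form above; concretely one must classify the self-replicating bisets over an affine Weyl group that can arise from a post-critically finite map and check that each is realized by some $T_{\Phi,d}$. I expect this to combine Nekrashevych's rigidity theorems for contracting groups with the very constrained structure of virtual endomorphisms of $\til{W}_\Phi$ (essentially dilations of the translation lattice), followed by an analytic input upgrading combinatorial equivalence to genuine conjugacy — the flat-orbifold analogue of Thurston rigidity, which should hold precisely because the limit orbifold is Euclidean.
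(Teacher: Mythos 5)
The statement you are addressing is stated in the paper as a \emph{conjecture}: the author offers no proof, and to the best of my knowledge none exists in the literature. So there is no argument of the paper to compare yours against, and the relevant question is whether your outline closes the problem. It does not; it is a research program with at least one essential gap, which you partly acknowledge yourself.

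The gap is concentrated in your first stage, and it is not merely the absence of a higher-dimensional Thurston theorem. To invoke Nekrashevych's limit-space machinery you need two things that the hypothesis does not supply. First, the limit space is an invariant of the self-similar \emph{action} (equivalently, of the biset of $f$ over $\IMG(f)$), not of the abstract isomorphism type of the group; the hypothesis ``$\IMG(f)$ is isomorphic to $\til{W}_\Phi$'' gives only the latter. A single group can admit many inequivalent self-replicating actions with non-homeomorphic limit spaces, so the step ``crystallographic group $\Rightarrow$ flat limit orbifold'' needs a classification of all self-replicating bisets over $\til{W}_\Phi$ realizable by post-critically finite endomorphisms of $\C^n$ --- which is essentially the conjecture itself, restated combinatorially. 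Second, the limit-space theory requires $\IMG(f)$ to be \emph{contracting}, which for post-critically finite endomorphisms of $\C^n$ with $n\ge 2$ is tied to expansion near the Julia set and is not automatic; you would need to establish it before any of the limit-space arguments apply. Your second stage has a further soft spot: even granting an affine lift $\mx{x}\mapsto A\mx{x}+\mx{b}$ on a torus cover, forcing $A$ (or a power of $A$) to be a scalar integer $d$ requires an argument --- for irreducible $\Phi$ one can hope to use Schur's lemma after showing $A$ centralizes $W_\Phi$, but $A$ a priori only normalizes the crystallographic group, and for reducible $\Phi$ the scalar conclusion can genuinely fail. None of this makes your strategy wrong as a strategy --- it is a natural attack, parallel to the one-dimensional parabolic-orbifold classification --- but as written it assumes the rigidity it is supposed to prove.
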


Here is the structure of the paper.
In \S\ref{S:root}, we recall the necessary definitions 
from the theory of root systems and review the definition 
of the Chebyshev-like maps $T_{\Phi,d}$. 
In \S\ref{S:critical}, we study the 
post-critical locus of each map $T_{\Phi,d}$. 
In \S\ref{S:img}, we recall the definition of iterated 
monodromy groups and establish a key lemma. 
Finally, in \S\ref{S:computation} we prove the main result.

\section{Root systems and Chebyshev-like maps}\label{S:root}

First we review some of the theory of root systems. 
References are \cite{hS90,bH03}. The notation used here 
is similar but not identical to that of \cite{mHwW88}.

Throughout, we endow $\R^n$ with the standard 
inner product $\inner{\,\cdot\,}{\,\cdot\,}$, 
which we extend to a Hermitian form on $\C^n$, 
also written $\inner{\,\cdot\,}{\,\cdot\,}$, 
that is antilinear in the first variable and 
complex linear in the second variable---that is, 
for all $\lambda\in\C$ and $\mx{v},\mx{w}\in\C^n$, 
we have $\inner{\mx{v}}{\lambda\mx{w}} 
= \lambda \inner{\mx{v}}{\mx{w}} 
= \inner{\bar\lambda\mx{v}}{\mx{w}}$.

\begin{definition}[complex reflection]
A nonzero vector $\mx{v} \in \C^n$ and a real number 
$\ell \in \R$ together determine a {\em complex 
reflection} $\rho_{\mx{v},\ell} : \C^n \to \C^n$, 
given algebraically by 
\[
\rho_{\mx{v},\ell}(\mx{x}) 
= \mx{x} 
  - 2\frac{\inner{\mx{v}}{\mx{x}} - \ell}{\inner{\mx{v}}{\mx{v}}} \mx{v}.
\]
\end{definition}

Note that $\rho_{\mx{v},\ell}$ is complex-affine in $\mx{x}$, 
and its derivative is $D\rho_{\mx{v},\ell} = \rho_{\mx{v},0}$. 
The fixed-point set of $\rho_{\mx{v},\ell}$ is the complex 
hyperplane $H_{\mx{v},\ell}$ defined by the equation 
$\inner{\mx{v}}{\mx{x}} = \ell$. If $\mx{v} \in \R^n$, then 
$\rho_{\mx{v},\ell}$ restricts to an ordinary reflection 
$\R^n \to \R^n$ across the real hyperplane 
$H_{\mx{v},\ell} \cap \R^n$.

\begin{definition}[root system, root, coroot]
A {\em root system} (with \emph{rank $n$}) is a finite set 
of vectors $\Phi \subset \R^n$ such that the following 
conditions are satisfied:
\begin{itemize}
\item $\Phi$ spans $\R^n$;
\item if $\mx{v} \in \Phi$ and $\lambda \in \R$, then 
$\lambda\mx{v} \in \Phi \iff \lambda = \pm1$;
\item if $\mx{v} \in \Phi$, then 
$\rho_{\mx{v},0}(\mx{w}) \in \Phi$ 
for all $\mx{w} \in \Phi$;
\item if $\mx{v} \in \Phi$ and $\mx{w} \in \Phi$, then 
$2 \dfrac{\inner{\mx{v}}{\mx{w}}}{\inner{\mx{v}}{\mx{v}}} \in \Z$.
\end{itemize}
Elements of $\Phi$ are called {\em roots}. 
For $\mx{v} \in \Phi$, the {\em coroot} of $\mx{v}$ 
is $\mx{v}^\vee = 
\dfrac{2\,\mx{v}}{\inner{\mx{v}}{\mx{v}}}$.
\end{definition}

A root system $\Phi$ is {\em irreducible} if it cannot 
be partitioned into root systems of lower rank, which 
are contained in orthogonal subspaces. 

\begin{figure}[bth]
\centering
\begin{minipage}{1.2in}
\centering
\includegraphics[scale=1.25]{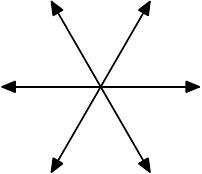}
\end{minipage}
\hspace{0.35in}
\begin{minipage}{1.1in}
\centering
\includegraphics[scale=1.1]{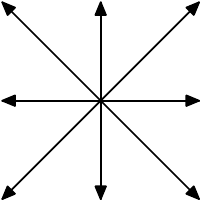}
\end{minipage}
\hspace{0.35in}
\begin{minipage}{1in}
\centering
\includegraphics{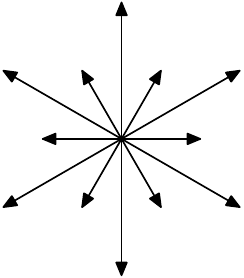}
\end{minipage}

\caption{Irreducible root systems of rank 2: 
respectively, $A_2$, $B_2$, and $G_2$.}\label{F:rank2}
\end{figure}

When the type of $\Phi$ is known 
(such as $A_n$, $B_n$, $C_n$, $D_n$, $E_6$, $E_7$, $E_8$, 
$F_4$, or $G_2$ in the cases that $\Phi$ is irreducible), 
then in notation we may replace $\Phi$ with its type.

\begin{definition}[Weyl group, affine Weyl group]
Given a root system $\Phi$, the {\em Weyl group} 
of $\Phi$ is the group $W_\Phi$ generated by all 
reflections of the form $\rho_{\mx{v},0}$, 
$\mx{v} \in \Phi$. The {\em affine Weyl group} of 
$\Phi$ is the group $\til{W}_\Phi$ generated by all 
reflections of the form $\rho_{\mx{v},\ell}$, 
$\mx{v} \in \Phi$, $\ell \in \Z$. 
\end{definition}

Equivalently, the affine Weyl group $\til{W}_\Phi$ 
can be defined as the semidirect product 
$Q_\Phi^\vee \rtimes W_\Phi$, where 
$Q_\Phi^\vee$ is the lattice in $\R^n$ generated by 
the coroots of $\Phi$. 
Both $W_\Phi$ and $\til{W}_\Phi$ may be thought of 
as acting on either $\R^n$ or $\C^n$.

\begin{definition}[simple roots, fundamental weights]
Given a root system $\Phi \subset \R^n$, let 
$\phi : \R^n \to \R$ be a linear functional that does 
not vanish on any elements of $\Phi$. The elements 
$\mx{v}$ of $\Phi$ such that $\phi(\mx{v}) > 0$ are 
called {\em positive roots} (relative to $\phi$); 
a positive root $\alpha$ is called {\em simple} if it 
cannot be written as a sum $\alpha = \mx{v} + \mx{w}$ 
where $\mx{v}$ and $\mx{w}$ are distinct positive roots. 
The simple roots form a basis $\{\alpha_1,\dots,\alpha_n\}$ 
of $\R^n$; the {\em fundamental weights} 
$\omega_1, \dots, \omega_n$ form the dual basis to 
the coroots of the simple roots: that is, 
\[
\inner{\omega_j}{\alpha_k^\vee} 
= \begin{cases}
  1 & \text{if}\ j = k\text, \\
  0 & \text{if}\ j \ne k\text. 
  \end{cases}
\]
\end{definition}

Now we reach our main definition.

\begin{definition}[generalized cosine, Chebyshev-like map]
Let $\Phi \subset \R^n$ be a root system and $W_\Phi$ 
its Weyl group. Let $\omega_1, \dots, \omega_n$ be 
a choice of fundamental weights for $\Phi$. For each 
$1 \le k \le n$, define $\psi_k$ from $\C^n$ to $\C$ by 
\begin{align*}
\psi_j(\mathbf{x}) 
:\!&= 
   \sum_{\mathbf{r} \in W_\Phi\omega_k} 
   \exp\!\big(2\pi i\inner{\mathbf{r}}{\mx{x}}\!\big) \\
&= \frac{1}{|\mathop{\mathrm{Stab}}_{W_\Phi}(\omega_k)|}
   \sum_{w \in W_\Phi} 
   \exp\!\big(2\pi i\inner{w\omega_k}{\mx{x}}\!\big),
\end{align*}
where $W_\Phi\omega_k$ is the orbit of $\omega_k$ under 
$W_\Phi$, and $\mathop{\mathrm{Stab}}_{W_\Phi}(\omega_k)$ 
is the stabilizer of $\omega_k$ in $W_\Phi$. 
Then define the {\em generalized cosine} 
$\Psi_\Phi : \C^n \to \C^n$ by 
\[
\Psi_\Phi := \big(\psi_1,\dots,\psi_n\big).
\]
For each integer $d \ge 2$, let $m_d : \C^n \to \C^n$ 
denote multiplication by $d$. Then the {\em Chebyshev-like map} 
$T_{\Phi,d} : \C^n \to \C^n$ is defined by the functional equation 
\begin{equation}\label{E:chebysheveq}
T_{\Phi,d} \circ \Psi_\Phi = \Psi_\Phi \circ m_d \text.
\end{equation}
\end{definition}

\begin{theorem*}[\cite{aV86,mHwW88}]
Given any root system $\Phi$ of rank $n$ and any 
integer $d \ge 2$, equation \eqref{E:chebysheveq} 
defines a polynomial map $T_{\Phi,d} : \C^n \to \C^n$.
\end{theorem*}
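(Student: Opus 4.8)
The plan is to show that $T_{\Phi,d}$ is well-defined as a map of $\C^n$ by exhibiting $\Psi_\Phi$ as a quotient map for a group action, and then descending $m_d$ through it. The key structural fact is that the functions $\psi_1,\dots,\psi_n$ are precisely a set of algebraically independent generators for the ring of $\til{W}_\Phi$-invariant (equivalently, $W_\Phi$-invariant and $Q_\Phi^\vee$-periodic) trigonometric polynomials on $\C^n$; this is a classical theorem going back to the Chevalley restriction theorem and its affine analogue. First I would observe that each $\psi_k$ is by construction invariant under $W_\Phi$ (the summation is over a $W_\Phi$-orbit) and under translation by the coroot lattice $Q_\Phi^\vee$ (since $\inner{\mx{r}}{\mx{x}} \in \Z$ whenever $\mx{r}$ lies in the weight lattice and $\mx{x} \in Q_\Phi^\vee$, because weights pair integrally with coroots). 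Hence each $\psi_k$ is $\til{W}_\Phi$-invariant, and so is $\Psi_\Phi$.

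Next I would invoke the fact that $\Psi_\Phi : \C^n \to \C^n$ realizes the quotient $\C^n / \til{W}_\Phi$: that is, $\Psi_\Phi(\mx{x}) = \Psi_\Phi(\mx{y})$ if and only if $\mx{y} = w\mx{x}$ for some $w \in \til{W}_\Phi$, and moreover $\Psi_\Phi$ is surjective onto $\C^n$. (This identification of the quotient of a complexified torus by the affine Weyl group with $\C^n$ is the content of the Hoffman--Withers / Veselov construction, and it is where the hypotheses on $\Phi$ — in particular the choice of fundamental weights, which guarantees algebraic independence of the $\psi_k$ — are really used.) Granting this, the functional equation \eqref{E:chebysheveq} determines $T_{\Phi,d}$ uniquely on the image of $\Psi_\Phi$, which is all of $\C^n$: given $\mx{z} \in \C^n$, pick any $\mx{x}$ with $\Psi_\Phi(\mx{x}) = \mx{z}$ and set $T_{\Phi,d}(\mx{z}) := \Psi_\Phi(d\mx{x})$. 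This is well-defined because $m_d$ carries $\til{W}_\Phi$-orbits into $\til{W}_\Phi$-orbits: if $\mx{y} = w\mx{x}$ with $w = \tau_\lambda \circ w_0$ for $\lambda \in Q_\Phi^\vee$ and $w_0 \in W_\Phi$, then $d\mx{y} = (\tau_{d\lambda} \circ w_0)(d\mx{x})$ and $d\lambda \in Q_\Phi^\vee$, so $d\mx{x}$ and $d\mx{y}$ lie in the same $\til{W}_\Phi$-orbit, hence have the same image under $\Psi_\Phi$.

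It then remains to check that the resulting map $T_{\Phi,d}$ is polynomial. Each component $\psi_k \circ m_d$ is again a $\til{W}_\Phi$-invariant trigonometric polynomial in $\mx{x}$ (the exponents $2\pi i \inner{\mx{r}}{d\mx{x}} = 2\pi i \inner{d\mx{r}}{\mx{x}}$ still lie in the weight lattice after scaling, and invariance is inherited from the argument above), so by the generation statement it is a polynomial in $\psi_1,\dots,\psi_n$; writing out this polynomial expression for each $k$ gives the components of $T_{\Phi,d}$ as polynomials in the coordinates of $\C^n$. I expect the main obstacle to be the clean invocation of the structure theorem for $\til{W}_\Phi$-invariant trigonometric polynomials — namely that $\C[\psi_1,\dots,\psi_n]$ is exactly this invariant ring and the $\psi_k$ are algebraically independent — since everything else (the orbit bookkeeping, the descent of $m_d$) is formal once that is in hand. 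In the writeup I would cite \cite{aV86,mHwW88} for this and limit the new verification to the orbit-compatibility of $m_d$ and the identification of $\Psi_\Phi$ with the quotient map.
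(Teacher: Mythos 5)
Your argument is correct and is essentially the approach the paper itself defers to: the theorem is stated as a citation to Veselov and Hoffman--Withers, with the accompanying remark that the construction rests on exponential invariants and Chevalley's theorem --- precisely the structure theorem (that $\C[\psi_1,\dots,\psi_n]$ is the full ring of $\til{W}_\Phi$-invariant exponential polynomials, with the $\psi_k$ algebraically independent) that you correctly isolate as the crux and propose to cite. The formal glue you supply --- $\til{W}_\Phi$-invariance of each $\psi_k\circ m_d$, compatibility of $m_d$ with $\til{W}_\Phi$-orbits, and uniqueness via surjectivity of $\Psi_\Phi$ --- is sound and matches the standard treatment in the cited sources.
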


The construction above is due independently to 
Veselov \cite{aV86,aV91} and Hoffman--Withers \cite{mHwW88}. 
Up to permutation of coordinates, $\Psi_\Phi$ 
is independent of the choice of fundamental weights, 
because the Weyl group acts transitively on bases of 
simple roots. The terminology of ``generalized cosine'' 
comes from Hoffman and Withers, who described these 
Chebyshev-like maps in terms of folding figures in $\R^n$, 
which leads to the consideration of root systems. Veselov 
expressed the construction in terms of exponential 
invariants of semi-simple Lie algebras and noted, from 
Chevalley's theorem, that the coefficients of the 
polynomials defining $T_{\Phi,d}$ are integers.

\begin{example}[Chebyshev polynomials]\label{Ex:A1}
In the classical case, from which the Chebyshev-like 
maps get their name, $\Phi = \{\pm1\}$ is the $A_1$ 
root system in $\R$. Then $1$ is a simple root, $\frac{1}{2}$ 
is the corresponding fundamental weight, and the 
Weyl group is just the two-element group generated 
by multiplication by $-1$ (in either $\R$ or $\C)$. 
Set $t = e^{i\,\pi x}$, so that 
$\Psi_{A_1}(x) = \psi_1(x) = t + t\inv$, 
and we have the $d$th Chebyshev polynomial 
$T_d := T_{A_1,d}$ defined by the equation 
$T_d(t+t\inv) = t^d+t^{-d}$. 
(Here we have followed the convention that 
$T_d(2\cos\theta) = 2\cos d\theta$, contra the 
equation $T_d(\cos\theta) = \cos d\theta$ stated 
in the introduction. These two conventions produce
dynamically conjugate maps.)
\end{example}

\begin{example}[A Chebyshev-like map in $2$ dimensions]\label{Ex:A2}
The $A_2$ root system is the simplest of the irreducible 
rank 2 root systems (see Figure~\ref{F:rank2}). It can be 
realized in the plane in $\R^3$ having equation 
$x_1 + x_2 + x_3 = 0$ as the set of six vectors 
$\Phi = \{(\pm1,\mp1,0),(0,\pm1,\mp1),(\pm1,0,\mp1)\}$.
One choice of simple roots is $\alpha_1 = (1,-1,0)$, 
$\alpha_2 = (0,1,-1)$. 
The corresponding fundamental weights are 
$\omega_1 = (2/3,-1/3,-1/3)$ and 
$\omega_2 = (1/3,1/3,-2/3)$, 
and so when $\mx{x} = (x_1,x_2,x_3)$ satisfies 
$x_1 + x_2 + x_3 = 0$, 
we have $\inner{\omega_1}{\mx{x}} = x_1$ and 
$\inner{\omega_2}{\mx{x}} = x_1 + x_2$. The Weyl group in 
this case is the symmetric group on $3$ elements, realized 
as the permutations of the coordinates in $\R^3$. 
If we set $t_j = \exp(i\,2\pi x_j)$ and $(X_1,X_2) = 
\Psi_{A_2}(\mx{x})$, then we have the equalities 
$t_1 t_2 t_3 = 1$, $X_1 = t_1 + t_2 + t_3$ and 
$X_2 = t_1 t_2 + t_2 t_3 + t_3 t_1$. In the coordinates 
$(X_1,X_2)$, we may write $T_{A_2,2}$, for instance, 
as the map 
\[
T_{A_2,2}(X_1,X_2) = \big({X_1}^2 - 2 X_2, {X_2}^2 - 2 X_1\big),
\]
which has been independently studied, e.g., 
in \cite{jB21,sN08,kU09}.
\end{example}

\section{Critical and post-critical loci}\label{S:critical}

\begin{definition}[critical point, critical value, 
post-critical locus, post-critically finite]
Let $f : \C^n \to \C^n$ be a holomorphic map. 
A {\em critical point} of $f$ is a point $\mx{c}$ 
such that the derivative $Df(\mx{c}) : \C^n \to \C^n$ 
is singular. 
The {\em critical locus} of $f$ is the set 
$\mathcal{C}_f$ containing all critical points of $f$. 
A {\em critical value} of $f$ is a point of the form 
$f(\mx{c})$, where $\mx{c}\in\mathcal{C}_f$. 
The {\em post-critical locus} of $f$ is 
the union $\mathcal{P}_f$ of all (strict) forward 
images of the critical locus of $f$, in symbols
\[
\mathcal{P}_f 
:= \bigcup_{k\ge1} f^k(\mathcal{C}_f)\text.
\] 
We say $f$ is {\em post-critically finite} 
if $\mathcal{C}_f \ne \C^n$ and $\mathcal{P}_f$ 
is a closed, proper subvariety of $\C^n$.
\end{definition}

The notion of a post-critically finite map of 
$\C^n$ was introduced by Forn{\ae}ss and Sibony 
\cite{jeFnS92,jeFnS95} as a generalization of 
post-critically finite polynomials on $\C$.
The post-critical locus of such a map 
$f$ includes the critical values of $f$, but it may 
(and generally does) include more points of $\C^n$. 
The map $f$ is locally a covering map away from its 
critical values. Thus, the restriction of $f$ to the 
complement of $\mathcal{C}_f \cup \mathcal{P}_f$ is 
a covering of the complement of $\mathcal{P}_f$. 

Although we will not treat the generalized cosine 
$\Psi_\Phi$ dynamically, we do need to know what 
its critical locus and critical values are.

\begin{definition}[Cartan--Stiefel diagram]
Given a root system $\Phi$, let $\mathcal{H}_\Phi$ 
be the union of all complex hyperplanes fixed by some 
non-identity element of the affine Weyl group $\til{W}_\Phi$.
That is, 
\[
\mathcal{H}_\Phi 
:= \bigcup_{\mx{v}\in\Phi,\,\ell\in\Z} H_{\mx{v},\ell}\,\text.
\]
This union of hyperplanes is the (complex) 
\emph{Cartan--Stiefel diagram} of $\til{W}_\Phi$.
\end{definition}

We also let $\mathcal{D}_\Phi$ be the image of 
$\mathcal{H}_\Phi$ by $\Psi_\Phi$; that is,
\[
\mathcal{D}_\Phi := \Psi_\Phi(\mathcal{H}_\Phi)\text.
\]

\begin{example}
When $\Phi$ is the $A_1$ root system as in Example~\ref{Ex:A1}, 
we have $\mathcal{H}_\Phi = \Z$ and $\mathcal{D}_\Phi = \{\pm2\}$.
\end{example}

The next example provides part of the motivation for 
the notation $\mathcal{D}_\Phi$.

\begin{example}
When $\Phi$ is the $A_2$ root system as in Example~\ref{Ex:A2},
the points of $\mathcal{H}_\Phi$ with real coordinates 
form the edges of a planar tiling by equilateral triangles. 
$\mathcal{D}_\Phi$ is the complex version of the 
deltoid (a.k.a.\ three-cusped hypocycloid) with equation 
${X_1}^2 {X_2}^2 + 18 X_1 X_2 = 4 ({X_1}^3 + {X_2}^3) + 27$. 
\end{example}

The next three lemmas demonstrate the importance of 
$\mathcal{H}_\Phi$ and $\mathcal{D}_\Phi$ to our study.

\begin{lemma}\label{L:Psi-critical}
For any root system $\Phi$, the critical locus of 
$\Psi_\Phi$ is $\mathcal{H}_\Phi$.
\end{lemma}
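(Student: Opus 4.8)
The plan is to compute the derivative $D\Psi_\Phi(\mx{x})$ directly from the definition and determine where it is singular. Writing $\psi_k(\mx{x}) = \sum_{\mx{r}\in W_\Phi\omega_k} \exp(2\pi i\inner{\mx{r}}{\mx{x}})$, the chain rule gives the gradient $\nabla\psi_k(\mx{x}) = 2\pi i \sum_{\mx{r}\in W_\Phi\omega_k} \mx{r}\,\exp(2\pi i\inner{\mx{r}}{\mx{x}})$, so the Jacobian matrix of $\Psi_\Phi$ has rows indexed by $k$ and can be written as a sum, over the orbits $W_\Phi\omega_k$, of rank-one contributions. The key observation is that $\Psi_\Phi$ factors through the map $\mx{x}\mapsto (e^{2\pi i\inner{\omega_1}{\mx{x}}},\dots)$ composed with a $W_\Phi$-averaging; more usefully, since the fundamental weights form a basis dual to the simple coroots, one can change coordinates so that $\Psi_\Phi$ becomes (up to the $W_\Phi$-symmetrization) the map induced on the quotient $\C^n/\til{W}_\Phi$ by passing to $W_\Phi$-invariant exponentials. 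Then $\Psi_\Phi$ is, essentially, the quotient map for the $\til W_\Phi$-action (composed with an \'etale change of coordinates coming from Chevalley's theorem on invariants), and its critical locus is exactly the locus of points with nontrivial $\til W_\Phi$-stabilizer, which is $\mathcal{H}_\Phi$ by definition.

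To make this precise without invoking invariant theory as a black box, I would argue as follows. First, show $\mx{x}\in\mathcal H_\Phi \Rightarrow \mx{x}\in\mathcal C_{\Psi_\Phi}$: if $\mx{x}\in H_{\mx v,\ell}$ then $\rho_{\mx v,\ell}$ fixes $\mx{x}$ and $\Psi_\Phi$ is $\til W_\Phi$-invariant (because each $\psi_k$ is, the orbit $W_\Phi\omega_k$ being permuted by $W_\Phi$ and the extra integer translations acting trivially on $\exp(2\pi i\inner{\mx r}{\mx x})$ since $\inner{\mx r}{Q_\Phi^\vee}\subseteq\Z$ for $\mx r$ a weight). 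Differentiating the identity $\Psi_\Phi\circ\rho_{\mx v,\ell} = \Psi_\Phi$ at the fixed point $\mx{x}$ gives $D\Psi_\Phi(\mx{x})\circ\rho_{\mx v,0} = D\Psi_\Phi(\mx{x})$, so $D\Psi_\Phi(\mx{x})$ kills the image of $\rho_{\mx v,0} - \id$, which is the line $\C\mx v$; hence $D\Psi_\Phi(\mx{x})$ is singular. Conversely, show $\mx{x}\notin\mathcal H_\Phi \Rightarrow D\Psi_\Phi(\mx{x})$ invertible: this is where the real content lies. The cleanest route is to exhibit the inverse function locally, i.e.\ show $\Psi_\Phi$ is a local biholomorphism near any $\mx{x}\notin\mathcal H_\Phi$, by checking that $\Psi_\Phi$ separates points near $\mx{x}$ (two nearby points have the same image only if related by $\til W_\Phi$, which near a free orbit forces them equal) and then invoking that a finite-to-one holomorphic map that is generically unramified and whose image has the expected dimension must be \'etale off its branch locus --- or, more concretely, compute $\det D\Psi_\Phi$ as a Weyl-denominator-type product and identify its zero set.

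The main obstacle is the converse direction: proving $D\Psi_\Phi(\mx{x})$ is nonsingular off $\mathcal H_\Phi$. I expect the slickest argument to run through the identification of the Jacobian determinant with a product formula. Concretely, after the change of variables $y_k = \inner{\omega_k}{\mx x}$ and setting $z_k = e^{2\pi i y_k}$, the coordinates $\psi_1,\dots,\psi_n$ become the $W_\Phi$-orbit sums of the monomials $z^{\omega_k}$, and a standard computation (the exponential analogue of the Weyl character formula / Jacobian of the Chevalley map) shows
\[
\det\!\big(D\Psi_\Phi(\mx{x})\big)
= c \prod_{\mx v\in\Phi^{+}} \big(e^{\pi i\inner{\mx v}{\mx x}} - e^{-\pi i\inner{\mx v}{\mx x}}\big)
\]
up to a nonzero constant $c$ and a nonvanishing exponential factor, where $\Phi^{+}$ is a set of positive roots. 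Each factor vanishes exactly when $\inner{\mx v}{\mx x}\in\Z$, i.e.\ when $\mx x\in H_{\mx v,\ell}$ for some $\ell\in\Z$; taking the union over $\mx v\in\Phi^{+}$ (equivalently over all of $\Phi$) gives precisely $\mathcal H_\Phi$. Combined with the forward inclusion above, this identifies $\mathcal C_{\Psi_\Phi} = \mathcal H_\Phi$, completing the proof. The routine part I would not belabor is verifying the product formula itself, which follows by expanding the determinant and recognizing the alternating sum over $W_\Phi$ as the Weyl denominator; the one point deserving care is checking that the fundamental weights are ``generic enough'' that no extra cancellation or vanishing occurs, which holds because the $\omega_k$ together with $W_\Phi$ generate the full weight lattice and the simple roots are a basis.
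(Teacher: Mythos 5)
Your proof is correct, but the hard direction runs along a genuinely different track from the paper's. For the inclusion $\mathcal{H}_\Phi \subseteq \mathcal{C}_{\Psi_\Phi}$ you and the paper do essentially the same thing (the paper phrases it as failure of local injectivity along $\mx{c}+\C\mx{v}$; your version, differentiating the invariance $\Psi_\Phi\circ\rho_{\mx{v},\ell}=\Psi_\Phi$ at a fixed point to see that $D\Psi_\Phi(\mx{x})$ annihilates $\C\mx{v}$, is a clean infinitesimal restatement and even avoids the paper's preliminary reduction to $\ell=0$ via $\til{W}_\Phi$-equivariance). For the reverse inclusion the paper argues softly: it factors $\Psi_\Phi$ as the unramified covering $\C^n\to\C^n/Q_\Phi^\vee$ followed by the quotient by the finite group $W_\Phi$, whose ramification locus is exactly the fixed-point set --- this is your ``Route 1,'' and it implicitly leans on the Veselov/Hoffman--Withers identification of that quotient with $\C^n$. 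Your preferred ``Route 2,'' identifying $\det D\Psi_\Phi$ up to a nonvanishing factor with the Weyl denominator $\prod_{\mx{v}\in\Phi^+}\bigl(e^{\pi i\inner{\mx{v}}{\mx{x}}}-e^{-\pi i\inner{\mx{v}}{\mx{x}}}\bigr)$, is a valid classical alternative (the Jacobian is $W_\Phi$-anti-invariant, hence divisible by the denominator, and the quotient is the constant $(2\pi i)^n\det[\inner{\omega_j}{\mx{e}_k}]\ne 0$ by a leading-term comparison at $\rho=\sum_j\omega_j$); it buys an explicit defining equation for the critical locus, with multiplicities, at the price of actually carrying out that determinant computation, which you only sketch. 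The point you rightly flag --- that no cancellation kills the top term --- is exactly the nonvanishing of $\det[\inner{\omega_j}{\mx{e}_k}]$, i.e.\ linear independence of the fundamental weights, so the gap is fillable; but as written this step is asserted rather than proved, whereas the paper's covering-space argument closes the loop with no computation.
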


\begin{proof}
It is evident from the definition of $\Psi_\Phi$ that 
$\Psi_\Phi(\til{w}\mx{x}) = \Psi_\Phi(\mx{x})$ for all 
$\til{w} \in \til{W}_\Phi$, and in particular that the 
set of critical points of $\Psi_\Phi$ is invariant under 
the action of $\til{W}_\Phi$. Thus it is sufficient to 
show that $\mx{c}$ is a critical point for $\Psi_\Phi$ 
if and only if it is equivalent under $\til{W}_\Phi$ to 
some point fixed by a non-identity element of $W_\Phi$.

First, note that if $\rho_{\mx{v},0}(\mx{c}) = \mx{c}$ 
for some $\mx{v} \in \Phi$, then for all $\lambda \in \C$ 
we have $\rho_{\mx{v},0}(\mx{c}+\lambda\mx{v}) = 
\mx{c}-\lambda\mx{v}$, and thus $\Psi_\Phi$ is not 
locally injective at $\mx{c}$; in other words, $\mx{c}$ 
is a critical point of $\Psi_\Phi$. 
Therefore, all of $\mathcal{H}_\Phi$ is contained in 
the critical locus of $\Psi_\Phi$.

To see that $\Psi_\Phi$ has no other critical points, 
first observe that the natural projection 
$\C^n \to \C^n/Q_\Phi^\vee$ is a covering map, having 
no critical points. Next, $\Psi_\Phi$ is the composition 
of this projection and the quotient map 
$\C^n/Q_\Phi^\vee \to \C^n$ given by the induced action 
of $W_\Phi$ on $\C^n/Q_\Phi^\vee$. The critical points 
of this latter action are precisely the points that 
are fixed by some non-identity element of $W_\Phi$, 
which is to say, the image of $\mathcal{H}_\Phi$ in 
$\C^n/Q_\Phi^\vee$.
\end{proof}

\begin{lemma}
Given a root system $\Phi$ and an integer $d \ge 2$, 
the Chebyshev-like map $T_{\Phi,d}$ is post-critically 
finite, with $\mathcal{D}_\Phi$ as its post-critical 
locus. 
\end{lemma}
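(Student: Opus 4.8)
The plan is to use the functional equation \eqref{E:chebysheveq} together with Lemma~\ref{L:Psi-critical} to transfer what we know about the critical structure of $\Psi_\Phi$ to $T_{\Phi,d}$. First I would differentiate the equation $T_{\Phi,d}\circ\Psi_\Phi = \Psi_\Phi\circ m_d$ at a point $\mx{x}\in\C^n$ to get $DT_{\Phi,d}(\Psi_\Phi(\mx{x}))\circ D\Psi_\Phi(\mx{x}) = d^n\,D\Psi_\Phi(d\mx{x})$ (up to the scalar $d$ from $Dm_d$ on each factor; I would write this carefully as $DT_{\Phi,d}(\Psi_\Phi(\mx{x}))\circ D\Psi_\Phi(\mx{x}) = d\cdot D\Psi_\Phi(d\mx{x})$, since $Dm_d = d\cdot\id$). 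Since $\Psi_\Phi$ is surjective (its image is all of $\C^n$, as it is a finite quotient map), every point of $\C^n$ is of the form $\Psi_\Phi(\mx{x})$, so this identity determines where $DT_{\Phi,d}$ is singular: $DT_{\Phi,d}(\Psi_\Phi(\mx{x}))$ is singular if and only if $D\Psi_\Phi(d\mx{x})$ is singular but $D\Psi_\Phi(\mx{x})$ is not—equivalently, $d\mx{x}\in\mathcal{H}_\Phi$ but $\mx{x}\notin\mathcal{H}_\Phi$. From this I would conclude that the critical locus $\mathcal{C}_{T_{\Phi,d}}$ equals $\Psi_\Phi\big((\tfrac1d\mathcal{H}_\Phi)\setminus\mathcal{H}_\Phi\big)$, and hence that the set of critical \emph{values} is contained in $\Psi_\Phi(\mathcal{H}_\Phi) = \mathcal{D}_\Phi$ (because $T_{\Phi,d}(\mathcal{C}_{T_{\Phi,d}}) = T_{\Phi,d}(\Psi_\Phi(\tfrac1d\mathcal{H}_\Phi)) = \Psi_\Phi(m_d(\tfrac1d\mathcal{H}_\Phi)) = \Psi_\Phi(\mathcal{H}_\Phi)$).

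Next I would establish that $\mathcal{D}_\Phi$ is forward-invariant under $T_{\Phi,d}$, and in fact that $T_{\Phi,d}(\mathcal{D}_\Phi) = \mathcal{D}_\Phi$. This follows again from the functional equation: $T_{\Phi,d}(\mathcal{D}_\Phi) = T_{\Phi,d}(\Psi_\Phi(\mathcal{H}_\Phi)) = \Psi_\Phi(m_d(\mathcal{H}_\Phi)) = \Psi_\Phi(d\mathcal{H}_\Phi)$, and since $d\mathcal{H}_\Phi \subseteq \mathcal{H}_\Phi$ (the hyperplanes $H_{\mx{v},\ell}$ with $\ell\in\Z$ are scaled into hyperplanes $H_{\mx{v},d\ell}$ with $d\ell\in\Z$) we get $T_{\Phi,d}(\mathcal{D}_\Phi)\subseteq\mathcal{D}_\Phi$; for the reverse inclusion one uses that $\mathcal{H}_\Phi\subseteq d\mathcal{H}_\Phi$ as well? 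No—I would instead note that $\Psi_\Phi$ is invariant under the finer lattice $\tfrac1d Q_\Phi^\vee$ is not true, but $\Psi_\Phi(d\mathcal{H}_\Phi)=\mathcal{D}_\Phi$ holds because every hyperplane $H_{\mx{v},\ell}$ is the image $d\cdot H_{\mx{v},\ell/d}$ and $H_{\mx{v},\ell/d}$ is $\til{W}_\Phi$-equivalent to one of the finitely many hyperplanes mod the coroot lattice, whose $\Psi_\Phi$-images recur. I would make this precise by reducing to the observation that $\Psi_\Phi$ factors through $\C^n/\til{W}_\Phi$ and that $m_d$ descends to a self-map of this quotient (which is essentially the content of the Veselov--Hoffman--Withers theorem), so that $T_{\Phi,d}$ is conjugate to the induced map on $\C^n/\til{W}_\Phi$; surjectivity of that induced map gives $T_{\Phi,d}(\mathcal{D}_\Phi)=\mathcal{D}_\Phi$.

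Granting forward-invariance, the post-critical locus is
\[
\mathcal{P}_{T_{\Phi,d}} = \bigcup_{k\ge1} T_{\Phi,d}^k(\mathcal{C}_{T_{\Phi,d}}) \subseteq \bigcup_{k\ge1} T_{\Phi,d}^{k-1}(\mathcal{D}_\Phi) = \mathcal{D}_\Phi,
\]
using that the critical values lie in $\mathcal{D}_\Phi$ and that $\mathcal{D}_\Phi$ is invariant. For the reverse inclusion $\mathcal{D}_\Phi\subseteq\mathcal{P}_{T_{\Phi,d}}$, I would show that $\mathcal{D}_\Phi$ is already contained in the critical values: a point of $\mathcal{D}_\Phi$ is $\Psi_\Phi(\mx{x})$ for some $\mx{x}\in\mathcal{H}_\Phi$, and then $\tfrac1d\mx{x}$ typically fails to lie in $\mathcal{H}_\Phi$ (since $\mathcal{H}_\Phi$ has integer "levels" $\ell$, and $\ell/d$ is generically not an integer), so $\Psi_\Phi(\tfrac1d\mx{x})$ is a critical point of $T_{\Phi,d}$ mapping to $\Psi_\Phi(\mx{x})$; the cases where $\tfrac1d\mx{x}$ does land in $\mathcal{H}_\Phi$ form a proper subvariety of each component and are handled by taking a nearby point in the same hyperplane, using irreducibility of each component of $\mathcal{D}_\Phi$. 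Finally I would check the two remaining conditions in the definition of post-critically finite: $\mathcal{C}_{T_{\Phi,d}}\ne\C^n$ (clear, since $\Psi_\Phi$ is generically a local homeomorphism and $DT_{\Phi,d}$ is invertible at $\Psi_\Phi(\mx{x})$ for generic $\mx{x}$), and $\mathcal{D}_\Phi = \Psi_\Phi(\mathcal{H}_\Phi)$ is a closed proper subvariety—closedness and the fact that it is a hypersurface following because $\Psi_\Phi$ is a finite (hence proper, closed) polynomial map and $\mathcal{H}_\Phi$, though an infinite union of hyperplanes, has image equal to the finite union $\Psi_\Phi(H_{\alpha_1,0}\cup\cdots)$ over a set of representatives modulo $\til{W}_\Phi$. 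The main obstacle I expect is precisely this last point together with the careful handling of the reverse inclusion $\mathcal{D}_\Phi\subseteq\mathcal{P}_{T_{\Phi,d}}$: one must argue that \emph{every} irreducible component of $\mathcal{D}_\Phi$ actually meets the post-critical set (not just that the post-critical set is contained in $\mathcal{D}_\Phi$), which requires knowing that the "levels divisible by $d$" do not exhaust any hyperplane's image, i.e. a genuine (if easy) arithmetic/geometric input about how $m_d$ interacts with the Cartan--Stiefel diagram.
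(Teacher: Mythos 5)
Your proposal follows essentially the same route as the paper's proof: differentiate the functional equation \eqref{E:chebysheveq}, use Lemma~\ref{L:Psi-critical} to locate the critical points of $T_{\Phi,d}$ along $\Psi_\Phi\big(\tfrac1d\mathcal{H}_\Phi\setminus\mathcal{H}_\Phi\big)$, push forward through the semiconjugacy to see that the critical values lie in $\mathcal{D}_\Phi$, and use $d\cdot H_{\mx{v},\ell}=H_{\mx{v},d\ell}$ for forward-invariance. The one caveat---shared with the paper's own wording---is that not every point of $\mathcal{D}_\Phi$ is literally a critical value (for $A_1$, $d=2$, the point $+2$ is not a critical value of $X^2-2$), so the inclusion $\mathcal{D}_\Phi\subseteq\mathcal{P}_{T_{\Phi,d}}$ really comes from the higher forward images $T_{\Phi,d}^k(\mathcal{C}_{T_{\Phi,d}})$, which the forward-invariance you establish already supplies.
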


\begin{proof}
Differentiating both sides of \eqref{E:chebysheveq} 
at a variable point $\mx{x}$ and applying the chain 
rule yields 
\[
[DT_{\Phi,d}(\Psi_\Phi(\mx{x}))] \circ [D\Psi_\Phi(\mx{x})] 
= [D\Psi_\Phi(d\mx{x})] \circ m_d
\]
(using the fact that $m_d$ is already linear). 
Therefore we shall determine when $\Psi_\Phi(\mx{x})$ 
is a critical point of $T_{\Phi,d}$. 

First suppose that $\mx{x}$ is not a critical point of 
$\Psi_\Phi$, i.e., $\mx{x} \notin \mathcal{H}_\Phi$. 
Then we can rewrite the above equation as 
\[
[DT_{\Phi,d}(\Psi_\Phi(\mx{x}))]
= [D\Psi_\Phi(d\mx{x})] \circ m_d 
  \circ [D\Psi_\Phi(\mx{x})]\inv\text.
\]
This equation implies that $\Psi_\Phi(\mx{x})$ is 
a critical point of $T_{\Phi,d}$ whenever $d\mx{x}$ 
is a critical point of $\Psi_\Phi$, i.e., when 
$d\mx{x} \in \mathcal{H}_\Phi$. 

Now the set of critical points is closed, 
and so every point in the closure of 
${m_d}\inv(\mathcal{H}_\Phi) \setminus \mathcal{H}_\Phi$ 
(which is to say, the union of all hyperplanes which are 
strict preimages of hyperplanes in $\mathcal{H}_\Phi$) 
also yields a critical point of $T_{\Phi,d}$. The critical 
values of $T_{\Phi,d}$ are therefore the images of 
$\mathcal{H}_\Phi$ by $\Psi_\Phi$, which is to say 
$\mathcal{D}_\Phi$.

Finally, note that $\mathcal{H}_\Phi$ is invariant under 
$m_d$, because $d\cdot H_{\mx{v},\ell} = 
H_{\mx{v},d\ell}$ and $d$ is an integer. Therefore all 
critical values of $T_{\Phi,d}$ lie in $\mathcal{D}_\Phi$, 
every point of $\mathcal{D}_\Phi$ is a critical value, 
and $\mathcal{D}_\Phi$ is invariant under $T_{\Phi,d}$.
\end{proof}

Recall that a covering map 
$p : \mathcal{Y} \to \mathcal{X}$ of 
path-connected topological spaces is called {\em regular} 
when the group of deck transformations 
$\mathrm{Gal}(\mathcal{Y}/\mathcal{X})$ 
acts transitively on each fiber of $p$. 

\begin{lemma}\label{L:galoisgroup}
Let $\Phi \subset \R^n$ be a root system having 
affine Weyl group $\til{W}_\Phi$. 
Set $\mathcal{X} = \C^n \setminus \mathcal{D}_\Phi$ 
and $\mathcal{Y} = \C^n \setminus \mathcal{H}_\Phi$. 
Then the restriction of the generalized cosine $\Psi_\Phi$ 
to $\mathcal{Y}$ is a regular covering of $\mathcal{X}$, 
with $\mathrm{Gal}(\mathcal{Y}/\mathcal{X}) = \til{W}_\Phi$.
\end{lemma}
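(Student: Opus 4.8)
The plan is to verify the three parts of the statement in turn: that $\Psi_\Phi$ restricts to a covering map $\mathcal{Y}\to\mathcal{X}$, that this covering is regular, and that its deck group is precisely $\til{W}_\Phi$. First I would check that $\Psi_\Phi$ actually maps $\mathcal{Y}$ into $\mathcal{X}$ and is surjective onto it. The containment $\Psi_\Phi(\mathcal{Y})\subseteq\mathcal{X}$ follows because $\Psi_\Phi\inv(\mathcal{D}_\Phi)=\mathcal{H}_\Phi$: one inclusion is the definition $\mathcal{D}_\Phi=\Psi_\Phi(\mathcal{H}_\Phi)$ together with $\til{W}_\Phi$-invariance of the fibers of $\Psi_\Phi$ (noted in the proof of Lemma~\ref{L:Psi-critical}), and the reverse inclusion uses that any $\mx{x}$ not in $\mathcal{H}_\Phi$ has trivial stabilizer in $\til{W}_\Phi$, so its $\Psi_\Phi$-fiber is a single free $\til{W}_\Phi$-orbit lying entirely outside $\mathcal{H}_\Phi$; hence its image cannot coincide with the image of a point on $\mathcal{H}_\Phi$. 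Surjectivity onto $\mathcal{X}$ follows since $\Psi_\Phi:\C^n\to\C^n$ is surjective (it is a proper polynomial map realizing the quotient $\C^n/\til{W}_\Phi$, as in Lemma~\ref{L:Psi-critical}) and the preimage of $\mathcal{D}_\Phi$ is exactly $\mathcal{H}_\Phi$.

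Next I would establish the covering property. By Lemma~\ref{L:Psi-critical}, $\Psi_\Phi$ has critical locus exactly $\mathcal{H}_\Phi$, so the restriction $\Psi_\Phi|_{\mathcal{Y}}$ is a local biholomorphism; combined with properness (inherited from $\Psi_\Phi:\C^n\to\C^n$, since $\mathcal{H}_\Phi=\Psi_\Phi\inv(\mathcal{D}_\Phi)$ makes the restriction proper over $\mathcal{X}$), a proper local homeomorphism between these spaces is a finite covering map. Alternatively, and more in keeping with the structure already laid out, I would factor $\Psi_\Phi|_{\mathcal{Y}}$ as the composition $\mathcal{Y}\to\mathcal{Y}/Q_\Phi^\vee\to\mathcal{Y}/\til{W}_\Phi=\mathcal{X}$ exactly as in the proof of Lemma~\ref{L:Psi-critical}: the first map is a covering because $Q_\Phi^\vee$ acts freely and properly discontinuously by translations, and the second is a covering because $W_\Phi$ acts freely on $\mathcal{Y}/Q_\Phi^\vee$ — freely precisely because we have removed all fixed-point hyperplanes of all affine reflections, which are exactly the hyperplanes constituting $\mathcal{H}_\Phi$. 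The composition of two covering maps is a covering map.

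Regularity and the identification of the deck group then come together: the group $\til{W}_\Phi$ acts on $\mathcal{Y}=\C^n\setminus\mathcal{H}_\Phi$ by deck transformations of $\Psi_\Phi|_{\mathcal{Y}}$, since $\Psi_\Phi\circ\til{w}=\Psi_\Phi$ for all $\til{w}\in\til{W}_\Phi$ (again from the proof of Lemma~\ref{L:Psi-critical}) and $\til{W}_\Phi$ preserves $\mathcal{H}_\Phi$ (because $\til{w}\cdot H_{\mx{v},\ell}$ is again a hyperplane of the form $H_{\mx{v}',\ell'}$ with $\mx{v}'\in\Phi$, $\ell'\in\Z$). This action is free on $\mathcal{Y}$ — a nontrivial affine Weyl group element fixes only points of its mirror, all of which lie in $\mathcal{H}_\Phi$ — and it is transitive on each fiber of $\Psi_\Phi|_{\mathcal{Y}}$, because two points of $\C^n$ have the same image under $\Psi_\Phi$ if and only if they lie in the same $\til{W}_\Phi$-orbit (this is the statement that $\Psi_\Phi$ realizes the quotient $\C^n/\til{W}_\Phi$, which I would cite from \cite{aV86,mHwW88} or deduce from the surjectivity and degree count via the factorization above). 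Transitivity of the deck action on fibers is exactly the definition of regularity, and a free transitive action identifies $\mathrm{Gal}(\mathcal{Y}/\mathcal{X})$ with $\til{W}_\Phi$.

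\textbf{Main obstacle.} The step I expect to require the most care is the claim that $\Psi_\Phi\inv(\mathcal{D}_\Phi)=\mathcal{H}_\Phi$ — equivalently, that the $\Psi_\Phi$-fiber over a point of $\mathcal{X}$ never meets $\mathcal{H}_\Phi$, i.e., that the fibers of $\Psi_\Phi$ are precisely the $\til{W}_\Phi$-orbits with no "extra" coincidences. This is where one genuinely needs that $\Psi_\Phi$ separates the orbits of $\til{W}_\Phi$ and not merely that it is invariant; proving it cleanly amounts to knowing that $\Psi_\Phi$ induces an isomorphism (of varieties, or at least a homeomorphism) $\C^n/\til{W}_\Phi\xrightarrow{\sim}\C^n$, which is the substance of the Veselov--Hoffman--Withers construction. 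I would handle it by combining the cited theorem that $T_{\Phi,d}$ is a polynomial (hence $\Psi_\Phi$ is a quotient map in the appropriate sense) with the freeness and proper discontinuity of the $\til{W}_\Phi$-action on $\mathcal{Y}$, which pins down the fiber cardinality and forces the fibers over $\mathcal{X}$ to be single free orbits, disjoint from $\mathcal{H}_\Phi$.
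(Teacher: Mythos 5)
Your proposal is correct and follows essentially the same route as the paper: the covering property comes from the factorization $\mathcal{Y}\to\mathcal{Y}/Q_\Phi^\vee\to\mathcal{Y}/\til{W}_\Phi$ established in the proof of Lemma~\ref{L:Psi-critical}, surjectivity onto $\mathcal{X}$ from $\mathcal{D}_\Phi=\Psi_\Phi(\mathcal{H}_\Phi)$, and regularity plus the identification of the deck group from the fact that the fibers over $\mathcal{X}$ are free $\til{W}_\Phi$-orbits; you are also right to single out the orbit-separation property of $\Psi_\Phi$ (that fibers are exactly orbits, with no extra identifications) as the input that the paper absorbs implicitly into its quotient description. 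One caveat: your alternative argument via properness does not work as stated, since $\Psi_\Phi$ is neither a polynomial map (it is built from exponentials) nor proper (its fibers are infinite discrete $\til{W}_\Phi$-orbits, hence non-compact), so the covering property must come from the quotient/factorization argument you give second, which is the one the paper uses.
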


\begin{proof}
By Lemma~\ref{L:Psi-critical}, no points of $\mathcal{Y}$ 
are critical for $\Psi_\Phi$, and therefore $\Psi_\Phi$ is 
locally a homeomorphism when restricted to $\mathcal{Y}$; 
i.e., $\Psi_\Phi\vert_{\mathcal{Y}}$ is a covering map. 
By definition, we have $\mathcal{D}_\Phi = 
\Psi_\Phi(\mathcal{H}_\Phi)$, so $\Psi_\Phi(\mathcal{Y}) 
= \mathcal{X}$. As observed in the proof of 
Lemma~\ref{L:Psi-critical}, 
$\Psi_\Phi(\til{w}\mx{x}) = \Psi_\Phi(\mx{x})$ for all 
$\til{w} \in \til{W}_\Phi$, so $\til{W}_\Phi$ is contained 
in $\mathrm{Gal}(\mathcal{Y}/\mathcal{X})$. Moreover, 
the same proof shows that the fiber over each point of 
$\mathcal{X}$ can be identified with $\til{W}_\Phi$, which 
implies $\mathrm{Gal}(\mathcal{Y}/\mathcal{X}) = \til{W}_\Phi$.
\end{proof}

An immediate consequence of Lemma~\ref{L:galoisgroup} 
is an expression for the fundamental group 
$\pi_1(\C^n\setminus\mathcal{D}_\Phi)$ as an extension 
of $\til{W}_\Phi$. Recall that any covering map 
$p : \mathcal{Y} \to \mathcal{X}$ induces an injective 
group homomorphism 
$p_* : \pi_1(\mathcal{Y}) \to \pi_1(\mathcal{X})$, 
defined by $p_*([\eta]) = [p \circ \eta]$. 
The subgroup $p_*(\pi_1(\mathcal{Y}))$ is normal in 
$\pi_1(\mathcal{X})$ precisely when $p$ is a regular 
covering, and in this situation the quotient 
$\pi_1(\mathcal{X})/p_*(\pi_1(\mathcal{Y}))$ 
is isomorphic to the deck transformation group 
$\mathrm{Gal}(\mathcal{Y}/\mathcal{X})$. 
(See \cite{aH} for details.)

\begin{corollary}
Given a root system $\Phi$ with affine Weyl group 
$\til{W}_\Phi$, let $\Psi_\Phi$, $\mathcal{H}_\Phi$, 
and $\mathcal{D}_\Phi$ be defined as above. Then we 
have the following exact sequence:
\[
\begin{CD}
0 @>>> \pi_1(\C^n\setminus\mathcal{H}_\Phi) 
  @>>> \pi_1(\C^n\setminus\mathcal{D}_\Phi) 
  @>>> \til{W}_\Phi @>>> 0
\end{CD}
\]
where the map 
$\pi_1(\C^n\setminus\mathcal{H}_\Phi) 
 \to \pi_1(\C^n\setminus\mathcal{D}_\Phi)$ is the 
injection $(\Psi_\Phi)_*$, and the map 
$\pi_1(\C^n\setminus\mathcal{D}_\Phi)\to\til{W}_\Phi$ 
is the induced canonical projection.
\end{corollary}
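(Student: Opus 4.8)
The plan is to derive this corollary directly from Lemma~\ref{L:galoisgroup} together with the standard covering-space machinery recalled in the paragraph immediately preceding the statement. By that lemma, the restriction $\Psi_\Phi : \mathcal{Y} \to \mathcal{X}$, where $\mathcal{Y} = \C^n \setminus \mathcal{H}_\Phi$ and $\mathcal{X} = \C^n \setminus \mathcal{D}_\Phi$, is a regular covering map whose deck transformation group is $\til{W}_\Phi$. The corollary is then essentially a translation of this fact into the language of fundamental groups, so the proof should be short.

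First I would note that, because $\mathcal{X}$ and $\mathcal{Y}$ are path-connected (indeed they are connected complex manifolds, being complements of proper subvarieties in $\C^n$), the covering $\Psi_\Phi\vert_{\mathcal{Y}}$ induces an injective homomorphism $(\Psi_\Phi)_* : \pi_1(\mathcal{Y}) \to \pi_1(\mathcal{X})$; this gives exactness at the left, i.e., injectivity of the first nontrivial map. Next, since the covering is regular, the image subgroup $(\Psi_\Phi)_*\big(\pi_1(\mathcal{Y})\big)$ is normal in $\pi_1(\mathcal{X})$, and the quotient $\pi_1(\mathcal{X}) / (\Psi_\Phi)_*\big(\pi_1(\mathcal{Y})\big)$ is canonically isomorphic to $\mathrm{Gal}(\mathcal{Y}/\mathcal{X})$, which Lemma~\ref{L:galoisgroup} identifies with $\til{W}_\Phi$. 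This canonical isomorphism is the map labeled $\pi_1(\C^n\setminus\mathcal{D}_\Phi)\to\til{W}_\Phi$ in the statement; concretely, it sends the class of a loop $\gamma$ based at some point of $\mathcal{X}$ to the deck transformation determined by the endpoint of the lift of $\gamma$ starting from a fixed basepoint in the fiber. Exactness at the middle term is then precisely the statement that the kernel of this projection equals the image of $(\Psi_\Phi)_*$, which is exactly the content of the correspondence between subgroups and coverings; exactness at the right is surjectivity of the projection, which holds because the deck group acts transitively on the fiber (regularity) and $\mathcal{Y}$ is path-connected.

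To keep the argument self-contained I would cite \cite{aH} for the two facts used: (i) a covering of path-connected spaces induces an injection on $\pi_1$, with image equal to the subgroup classifying the covering; and (ii) for a regular covering, this image is normal and the quotient is the deck group, with the quotient map realized by the endpoint-of-lift construction. Since the statement of the corollary merely packages these standard facts, the ``main obstacle'' is not really an obstacle at all — it is simply making sure the maps named in the statement are identified with the canonical covering-theoretic maps, and that path-connectedness of $\mathcal{X}$ and $\mathcal{Y}$ is observed so that the classical theorems apply verbatim. The only genuine input beyond formalities is Lemma~\ref{L:galoisgroup}, which has already been established.
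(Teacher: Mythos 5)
Your proof is correct and follows exactly the paper's route: the corollary is deduced as an immediate consequence of Lemma~\ref{L:galoisgroup} combined with the standard covering-space facts (injectivity of $(\Psi_\Phi)_*$, normality of the image for a regular covering, and identification of the quotient with the deck group) that the paper recalls, citing \cite{aH}, in the paragraph preceding the statement.
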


\section{Iterated monodromy groups}\label{S:img}

Iterated monodromy groups of dynamical systems 
(and of topological automata more generally) were 
introduced by V.~Nekrashevych \cite{vN-img,vN-ssg,vN-sym}. 
We recall the definition, using slightly different 
notation.

\begin{definition}[partial self-covering, monodromy action, iterated 
monodromy group]
Let $\mathcal{X}$ be a path-connected, locally path-connected 
topological space. A {\em partial self-covering} of $\mathcal{X}$ is a 
covering map $f : \mathcal{X}_1 \to \mathcal{X}$, where $\mathcal{X}_1$ 
is an open, path-connected subset of $\mathcal{X}$. Each iterate $f^k$ 
of a partial self-covering of $\mathcal{X}$ is again a partial 
self-covering, with domain $\mathcal{X}_k = f^{-k}(\mathcal{X})$. 
We will label a partial self-covering by the pair $(\mathcal{X},f)$.

Given a partial self-covering $(\mathcal{X},f)$ and a point 
$x_0 \in \mathcal{X}$, let $\mx{T}_f$ be the {\em tree of 
preimages of $x_0$}, namely, the vertex set of $\mx{T}_f$ 
is the disjoint union 
\[
\mx{T}_f = \bigsqcup_{k\ge0} f^{-k}(x_0),
\]
and $\mx{T}_f$ has an edge from $x' \in f^{-k}(x_0)$ 
to $x'' \in f^{-(k-1)}(x_0)$ if $x'' = f(x')$. If 
$f$ has topological degree $\delta$, then $\mx{T}_f$ 
is a rooted $\delta$-ary tree with root $x_0$. 

The fundamental group $\pi_1(\mathcal{X},x_0)$ acts on 
$\mx{T}_f$ as follows: given a loop $\gamma$ based at $x_0$ 
and $x' \in f^{-k}(x_0)$, use $f^k$ to lift $\gamma$ to a 
path $\tilde\gamma$ starting at $x'$, and let 
$[\gamma] \cdot x'$ be the endpoint of $\tilde\gamma$. 
This is the {\em monodromy action}, which 
induces the {\em monodromy homomorphism} 
$\mu_f : \pi_1(\mathcal{X},x_0) \to \Aut(\mx{T}_f)$,
\[
\mu_f([\gamma]) : x' \mapsto [\gamma] \cdot x'.
\]
The image of $\pi_1(\mathcal{X},x_0)$ via $\mu_f$ is the 
{\em iterated monodromy group} of $f$, denoted $\mathrm{IMG}(f)$. 
\end{definition}

It is not hard to check that, up to isomorphism, 
$\mathrm{IMG}(f)$ is independent of the choice of 
basepoint $x_0$. However, in what follows we will 
occasionally need to be attentive to basepoints 
for other reasons.

\begin{example}
Suppose $f : \C^n \to \C^n$ is post-critically finite, 
with critical locus $\mathcal{C}$ and post-critical 
locus $\mathcal{P}$. Then the restriction of $f$ to 
$\C^n \setminus (\mathcal{C}\cup\mathcal{P})$ is a 
partial self-covering of $\mathcal{X} = \C^n\setminus\mathcal{P}$. 
In this situation, we define $\mathrm{IMG}(f)$ to be 
the iterated monodromy group of $(\mathcal{X},f)$.
\end{example}

Given a partial self-covering $(\mathcal{X},f)$, 
it follows from the definitions that 
$[\gamma] \in \pi_1(\mathcal{X},x_0)$ is in the kernel 
of the monodromy homomorphism $\mu_f$ if and only 
if every lift of $\gamma$ by every iterate of $f$ 
is a loop (i.e., closed). This observation will be 
useful at several points.

\begin{example}[cf.~\cite{vN-img}]
Let $T_d : \C \to \C$ be the $d$th Chebyshev polynomial 
(as in Example~\ref{Ex:A1}), and set 
$\mathcal{X} = \C \setminus \{\pm2\}$. 
The $d-1$ critical points of $T_d$ are $2\cos(j\pi/d)$, 
$1 \le j \le d-1$, and the images of these points lie in 
$\{\pm2\}$; moreover, $\{\pm2\}$ is forward invariant under 
$T_d$. Thus the restriction of $T_d$ to $\mathcal{X}_1 = 
\C \setminus \{ 2\cos(j\pi/d) \mid 0 \le j \le d\}$ is a 
partial self-covering of $\mathcal{X}$. The fundamental 
group of $\mathcal{X}$ with basepoint $0$ is generated by 
$[\gamma_+]$ and $[\gamma_-]$, where $\gamma_\pm$ are the 
loops defined by $\gamma_\pm(s) = \pm2(1-e^{2\pi is})$ 
(Figure~\ref{F:fundgrp}, left). Using the 
relation $T_d\big(t+t\inv\big) = t^d+t^{-d}$, it can be 
seen that $[\gamma_+]$ and $[\gamma_-]$ both act on the 
tree $\mathbf{T}_{T_d}$ as order $2$ automorphisms 
(Figure~\ref{F:fundgrp}, right).
\begin{figure}[htb]
\centering
\begin{minipage}{2in}
\centering
\includegraphics{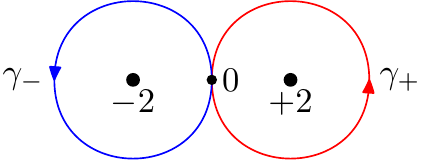}
\end{minipage}
\hspace{0.35in}
\begin{minipage}{2.2in}
\centering
\includegraphics[scale=1.35]{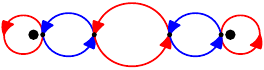}
\end{minipage}
\caption{{\sc Left:} Generators $\gamma_\pm$ of the fundamental 
group of $\C \setminus \{+2,-2\}$. {\sc Right:} Lifts of $\gamma_\pm$ 
by the Chebyshev polynomial $T_4 = T_2^2$. The large dots represent 
$-2$ and $+2$. Red curves are lifts of $\gamma_+$, and blue curves are 
lifts of $\gamma_-$. Each curve begins and ends at a point of 
$T_4\inv(0)$.}
\label{F:fundgrp}
\end{figure}
On the other hand, the 
product $[\gamma_-][\gamma_+]$ acts on the $k$th level of 
$\mathbf{T}_{T_d}$ as a permutation of order $d^k$; therefore 
the order of $\mu_{T_d}\big([\gamma_-][\gamma_+]\big)$ is 
infinite. Thus the iterated monodromy group of $T_d$ is 
isomorphic to the infinite dihedral group, or in other words 
the affine Weyl group of the $A_1$ root system.
\end{example}

\begin{definition}[semiconjugacy]
Two partial self-coverings $g : \mathcal{Y}_1 \to \mathcal{Y}$ 
and $f : \mathcal{X}_1 \to \mathcal{X}$ are {\em semiconjugate} 
if there exists a continuous map $p : \mathcal{Y} \to \mathcal{X}$ 
such that $p(\mathcal{Y}_1) = \mathcal{X}_1$ and 
$p \circ g = f \circ p$ on $\mathcal{Y}_1$. The map $p$ is then 
called a {\em semiconjugacy} from $g$ to $f$, and we write 
$p : (\mathcal{Y},g) \to (\mathcal{X},f)$.
\end{definition}

We are particularly interested in certain cases where 
two partial self-coverings $(\mathcal{X},f)$ and 
$(\mathcal{Y},g)$ are semiconjugate by a covering map 
$p : \mathcal{Y} \to \mathcal{X}$. 

\begin{lemma}\label{L:diagram}
Let $(\mathcal{X},f)$ and $(\mathcal{Y},g)$ be 
partial self-coverings, with 
$p : (\mathcal{Y},g) \to (\mathcal{X},f)$ a semiconjugacy. 
Suppose that $p$ is a regular covering map such that 
$p_*(\ker\mu_g) \subseteq \ker\mu_f$. Choose a basepoint 
$y_0 \in \mathcal{Y}$, and set $x_0 = p(y_0)$. Then the diagram
\begin{equation}\label{Eq:cd}
\begin{CD}
@. 0 @. 0 @. 0 @. \\
@. @VVV @VVV @VVV \\
0 @>>> \ker\mu_g @>>> \ker\mu_f @>>> \ker\mu_f/\ker\mu_g @>>> 0 \\
@. @VVV @VVV @VVV \\
0 @>>> \pi_1(\mathcal{Y},y_0) @>{p_*}>> \pi_1(\mathcal{X},x_0) @>>> \mathrm{Gal}(\mathcal{Y}/\mathcal{X}) @>>> 0 \\
@. @VV{\mu_g}V @VV{\mu_f}V @VVV \\
0 @>>> \mathrm{IMG}(g) @>>> \mathrm{IMG}(f) 
  @>>> \mathrm{IMG}(f)/\mathrm{IMG}(g) @>>> 0 \\
@. @VVV @VVV @VVV \\
@. 0 @. 0 @. 0 @. 
\end{CD}
\end{equation}
is commutative, with exact rows and columns. 
\end{lemma}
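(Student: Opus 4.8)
The plan is to assemble the diagram one piece at a time, starting from the two parts that are essentially handed to us. The middle row is the short exact sequence attached to the regular covering $p\colon\mathcal Y\to\mathcal X$: the map $p_*$ is injective because $p$ is a covering, its image is a \emph{normal} subgroup precisely because $p$ is regular, and the quotient $\pi_1(\mathcal X,x_0)/p_*\pi_1(\mathcal Y,y_0)$ is canonically identified with $\mathrm{Gal}(\mathcal Y/\mathcal X)$ (see \cite{aH}). The left and center columns are tautological: by definition $\mathrm{IMG}(g)$ is the image of $\mu_g$, so $1\to\ker\mu_g\to\pi_1(\mathcal Y,y_0)\xrightarrow{\mu_g}\mathrm{IMG}(g)\to1$ is exact, and likewise for $f$; in particular $\ker\mu_g\trianglelefteq\pi_1(\mathcal Y,y_0)$ and $\ker\mu_f\trianglelefteq\pi_1(\mathcal X,x_0)$.

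Next I would introduce the maps in the remaining row and column. The hypothesis $p_*(\ker\mu_g)\subseteq\ker\mu_f$ says exactly that $p_*$ descends to a homomorphism $\mathrm{IMG}(g)=\pi_1(\mathcal Y,y_0)/\ker\mu_g\to\pi_1(\mathcal X,x_0)/\ker\mu_f=\mathrm{IMG}(f)$, which is the arrow in the bottom row; the lower-left square commutes by construction. Its image $\mu_f\bigl(p_*\pi_1(\mathcal Y,y_0)\bigr)$ is normal in $\mathrm{IMG}(f)$, because $p_*\pi_1(\mathcal Y,y_0)$ is normal in $\pi_1(\mathcal X,x_0)$ and $\mu_f$ is surjective; hence $\mathrm{IMG}(f)/\mathrm{IMG}(g)$ makes sense. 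The composite $\pi_1(\mathcal X,x_0)\xrightarrow{\mu_f}\mathrm{IMG}(f)\to\mathrm{IMG}(f)/\mathrm{IMG}(g)$ kills both $\ker\mu_f$ and $p_*\pi_1(\mathcal Y,y_0)$, so it factors through $\mathrm{Gal}(\mathcal Y/\mathcal X)$, producing the right-hand column map and making the lower-right square commute. With all of this in place the top row and the top of the right column are determined, and checking that the upper squares commute is routine.

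The one step that requires real work is exactness of the bottom row at $\mathrm{IMG}(g)$, i.e. $p_*^{-1}(\ker\mu_f)\cap\pi_1(\mathcal Y,y_0)=\ker\mu_g$; one inclusion is the hypothesis, and the reverse inclusion is the crux of the lemma. To prove it I would first observe that $p$ intertwines the monodromy actions: if $\tilde\gamma$ is the lift of a loop $\gamma$ through $g^k$ starting at some $y'\in g^{-k}(y_0)$, then $p\circ\tilde\gamma$ is the lift of $p\circ\gamma$ through $f^k$ starting at $p(y')\in f^{-k}(x_0)$, by $p\circ g=f\circ p$ and uniqueness of path lifting; thus $p$ induces a level-preserving, $p_*$-equivariant map of trees $\mathbf{T}_g\to\mathbf{T}_f$. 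Consequently, if $p_*[\gamma]\in\ker\mu_f$ then every such $p\circ\tilde\gamma$ is closed, so $\tilde\gamma$ ends at a point of $g^{-k}(y_0)$ in the same $p$-fiber as $y'$, i.e. $\mu_g([\gamma])$ preserves each $p$-fiber of $\mathbf{T}_g$. I would then upgrade ``fiber-preserving'' to ``trivial'' using the regularity of $p$ together with the description of $\ker\mu_g$ as the intersection, over all $k$ and all $y'\in g^{-k}(y_0)$, of the subgroups $(g^k)_*\pi_1(\mathcal Y_k,y')$: in the commuting square of coverings formed by $g^k$, $f^k$ and the restrictions of $p$, a loop whose $f^k$-lifts are all closed lifts through $p$ to a loop in $\mathcal Y_k$, and following basepoints forces the associated deck transformation to be the identity. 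This is the step I expect to be the main obstacle, and it is precisely where both the regularity of $p$ and the hypothesis are used.

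Once the reverse inclusion is known, $p_*(\ker\mu_g)=p_*\pi_1(\mathcal Y,y_0)\cap\ker\mu_f$ is an intersection of two subgroups normal in $\pi_1(\mathcal X,x_0)$, hence normal in $\ker\mu_f$, so the top row is a genuine short exact sequence; the second isomorphism theorem then identifies $\ker\mu_f/\ker\mu_g$ with the normal subgroup $(p_*\pi_1(\mathcal Y,y_0)\cdot\ker\mu_f)/p_*\pi_1(\mathcal Y,y_0)$ of $\mathrm{Gal}(\mathcal Y/\mathcal X)$, whose quotient is $\mathrm{IMG}(f)/\mathrm{IMG}(g)$, giving exactness of the right column. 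Every remaining spot is tautological and every square commutes by a short check; alternatively, once the middle row and the left and center columns are known to be exact, the remaining exactness can be deduced formally from the $3\times3$ lemma for groups.
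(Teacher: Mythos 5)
Your reduction of the lemma to the single identity $p_*^{-1}(\ker\mu_f)=\ker\mu_g$ (equivalently, injectivity of $\mathrm{IMG}(g)\to\mathrm{IMG}(f)$, equivalently exactness of the right column at $\ker\mu_f/\ker\mu_g$) is correct, and the rest of your write-up is routine and sound; this is in fact more explicit than the paper's own proof, which disposes of all exactness as ``by construction.'' The problem is that the one step you flag as the main obstacle is exactly where your argument stops being a proof. You correctly show that if $p_*[\eta]\in\ker\mu_f$, then the $g^k$-lift $\tilde\eta$ of $\eta$ at $y'$ projects to a closed $f^k$-lift $\tilde\gamma$ of $\gamma=p\circ\eta$, so the endpoint of $\tilde\eta$ lies in the same $p$-fiber as $y'$. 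But the assertion that ``a loop whose $f^k$-lifts are all closed lifts through $p$ to a loop in $\mathcal{Y}_k$, and following basepoints forces the associated deck transformation to be the identity'' is precisely the statement to be proved, not a proof of it: since $\tilde\eta$ is the $p$-lift of $\tilde\gamma$ starting at $y'$, it is closed if and only if $[\tilde\gamma]\in p_*\pi_1(\mathcal{Y},y')$, and knowing $[\gamma]\in p_*\pi_1(\mathcal{Y},y_0)$ says nothing a priori about the homotopy class of its lift $\tilde\gamma$. Regularity of $p$ only converts ``same fiber'' into ``differ by a deck transformation,'' which is where you started.

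What is true, and what I think you need to isolate, is that $\ker\mu_f$ is self-similar: every closed $f^k$-lift $\tilde\gamma$ of a loop in $\ker\mu_f$ again represents an element of $\ker\mu_f$, because the $f^j$-lifts of $\tilde\gamma$ are among the $f^{k+j}$-lifts of $\gamma$. Hence your step would follow from the inclusion $\ker\mu_f\subseteq p_*\pi_1(\mathcal{Y},y_0)$---but that inclusion is not among the hypotheses, and it does not follow from the stated hypothesis $p_*(\ker\mu_g)\subseteq\ker\mu_f$, which points in the opposite direction. (It does hold in the paper's application, where Lemma~\ref{L:kernelsurj} gives $\ker\mu_{T_{\Phi,d}}=(\Psi_\Phi)_*(\pi_1(\C^n\setminus\mathcal{H}_\Phi))$ exactly, and where $\mathrm{IMG}(g)=0$ makes the contested injectivity vacuous.) So either you must add such a hypothesis or supply an argument for this step; as written, the exactness of the bottom row at $\mathrm{IMG}(g)$ and of the right column at $\ker\mu_f/\ker\mu_g$ is unproved. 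Note also that the normality of $p_*(\ker\mu_g)$ in $\ker\mu_f$, which you need for the top row to be a short exact sequence of groups, is derived in your last paragraph from the same unproved identity, so it inherits the gap.
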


\begin{proof}
The vertical maps $\ker\mu_g \to \pi_1(\mathcal{Y},y_0)$ 
and $\ker\mu_f \to \pi_1(\mathcal{X},x_0)$ are inclusions, 
and so the upper left square of the diagram commutes by 
assumption. Outside of this square, the maps are all 
canonically determined, and the rest of the diagram 
commutes by standard theorems of group theory. Exactness 
is by construction.
\end{proof}

We close this section with a set of sufficient conditions 
for the inclusion $p_*(\ker\mu_g) \subseteq \ker\mu_f$ 
in the hypothesis of Lemma~\ref{L:diagram} to hold, which 
will allow us to employ the diagram \eqref{Eq:cd} in our 
arguments of the next section.

\begin{lemma}\label{L:kernelincl}
Let $p : (\mathcal{Y},g) \to (\mathcal{X},f)$ be a 
semiconjugacy of partial self-coverings. 
If $g^k$ is a regular covering map for all $k$, 
and $p$ is a regular covering map such that 
$p^{-1}(\mathcal{X}_k) = \mathcal{Y}_k$ for all $k$, 
then $p_*(\ker\mu_g) \subseteq \ker\mu_f$.
\end{lemma}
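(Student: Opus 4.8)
The plan is to show directly that if $[\eta] \in \ker\mu_g \subseteq \pi_1(\mathcal{Y}, y_0)$, then $p_*([\eta]) = [p \circ \eta]$ lies in $\ker\mu_f$. By the observation recorded after the definition of the iterated monodromy group, membership in $\ker\mu_f$ is equivalent to the statement that \emph{every} lift of $p \circ \eta$ by \emph{every} iterate $f^k$ is a closed loop. So fix $k \ge 1$ and a point $x' \in f^{-k}(x_0)$; I must show that the lift of $p \circ \eta$ by $f^k$ starting at $x'$ returns to $x'$.

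The key step is a lifting-correspondence argument that trades a lift along $f^k$ for a lift along $g^k$. Since $p$ is a covering map and (by hypothesis) $p^{-1}(\mathcal{X}_k) = \mathcal{Y}_k$, the point $x' \in \mathcal{X}_k$ has some preimage $y' \in p^{-1}(x') \subseteq \mathcal{Y}_k$. Now use the semiconjugacy relation $p \circ g = f \circ p$, iterated to give $p \circ g^k = f^k \circ p$ on $\mathcal{Y}_k$: I would first lift $\eta$ by $g^k$ starting at $y'$ to get a path $\til\eta$ in $\mathcal{Y}_k$, which is \emph{closed} because $[\eta] \in \ker\mu_g$; then push it down by $p$. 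The path $p \circ \til\eta$ is closed (it starts and ends at $x'$), and applying $f^k$ to it gives $f^k \circ p \circ \til\eta = p \circ g^k \circ \til\eta = p \circ \eta$. Hence $p \circ \til\eta$ is \emph{a} lift of $p \circ \eta$ by $f^k$ starting at $x'$, and by uniqueness of path lifting for the covering $f^k$ it is \emph{the} lift. Since $p \circ \til\eta$ is closed, we are done — with one gap to fill.

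The main obstacle, and the place where the two ``regular'' hypotheses get used, is the following subtlety: we produced \emph{one} lift of $\eta$ (the one at the chosen $y'$) that is closed, but we need the lift of $p \circ \eta$ at the \emph{prescribed} $x'$ to be closed, and \emph{a priori} $x'$ need not be $p(y')$ for a $y'$ whose $\eta$-lift is closed — unless closedness of $g^k$-lifts is independent of the chosen starting point in the fiber. This is exactly where regularity of $g^k$ enters: for a regular covering, the subgroup $(g^k)_*\pi_1(\mathcal{X}_k, -)$ is normal, so whether a loop lifts to a loop does not depend on which point of the fiber one starts at; equivalently, $\ker\mu_{g^k}$ (for the single covering $g^k$) is exactly the normal subgroup corresponding to that regular covering. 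Thus $[\eta]\in\ker\mu_g$ forces \emph{every} $g^k$-lift of $\eta$ to be closed, not just one. Symmetrically, regularity of the covering $p$ is what guarantees that the fiber $p^{-1}(x')$ is nonempty and that the construction is insensitive to the choice of $y'\in p^{-1}(x')$; combined with $p^{-1}(\mathcal{X}_k)=\mathcal{Y}_k$ it also ensures $y'$ actually lies in the domain $\mathcal{Y}_k$ of $g^k$, so the lift $\til\eta$ is defined in the first place. Once these points are nailed down, the argument above shows every $f^k$-lift of $p\circ\eta$ at every point of $f^{-k}(x_0)$ is closed, i.e. $p_*([\eta]) \in \ker\mu_f$, completing the proof.
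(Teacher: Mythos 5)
Your overall strategy (trade an $f^k$-lift for a $g^k$-lift via $p$) is the right one, but there is a genuine gap at the central step, and your discussion of where regularity enters addresses the wrong difficulty. You take an arbitrary $y' \in p^{-1}(x')$ and propose to ``lift $\eta$ by $g^k$ starting at $y'$.'' Such a lift exists only if $g^k(y') = \eta(0) = y_0$, and the semiconjugacy gives you only $p(g^k(y')) = f^k(p(y')) = x_0 = p(y_0)$: the point $g^k(y')$ lies in the $p$-fiber over $x_0$ but need not equal $y_0$. This is not a removable technicality. In the intended application of the lemma, $g = m_d$ is a homeomorphism, so $g^{-k}(y_0)$ is a single point, while $f^{-k}(x_0)$ has $d^{nk}$ points; your construction therefore produces a closed lift of $p\circ\eta$ at exactly one point of the fiber $f^{-k}(x_0)$ and says nothing about the others. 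Your attempted repair via regularity of $g^k$ (``closedness of a lift does not depend on the starting point in the fiber'') concerns the fiber $g^{-k}(y_0)$, where there is no issue to begin with --- $[\eta]\in\ker\mu_g$ already means every lift at every point of $g^{-k}(y_0)$ is closed --- and does not touch the actual problem, which is that $y'$ may fail to lie in $g^{-k}(y_0)$ at all. Likewise, nonemptiness of $p^{-1}(x')$ is just surjectivity of a covering map, not regularity.

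The paper's proof supplies the missing idea. Instead of lifting $\eta$ by $g^k$, lift the path $\tilde\gamma$ (the $f^k$-lift of $\gamma = p\circ\eta$ at $x'$) by the covering $p$ to a path $\tilde\eta$ in $\mathcal{Y}_k$ starting at $y'$; this is possible because $p^{-1}(\mathcal{X}_k) = \mathcal{Y}_k$. Then $g^k\circ\tilde\eta$ is a $p$-lift of $\gamma$ starting at $g^k(y')$, and regularity of $p$ (transitivity of the deck group on fibers) gives $\tau\in\mathrm{Gal}(\mathcal{Y}/\mathcal{X})$ with $\tau(g^k(y')) = y_0$, so that $\tilde\eta$ is a lift of $\eta$ by $\tau\circ g^k$. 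Regularity of $g^k$ is then used to write $\tau\circ g^k = g^k\circ\tau'$ for a homeomorphism $\tau'$ of $\mathcal{Y}_k$, whence $\tau'\circ\tilde\eta$ is an honest $g^k$-lift of $\eta$, hence closed; therefore $\tilde\eta$ and $\tilde\gamma = p\circ\tilde\eta$ are closed. Some version of this deck-transformation correction is needed for your argument to go through.
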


\begin{proof}
Choose a basepoint $y_0 \in \mathcal{Y}$, and set 
$x_0 = p(y_0)$. Let $[\eta] \in \ker\mu_g$; then every 
lift of $\eta$ by every iterate $g^k$ is a loop. 
Set $\gamma = p \circ \eta$, and let $\tilde\gamma$ be 
a lift of $\gamma$ by some iterate $f^k$. 
We want to show that $\tilde\gamma$ is a loop. 
By definition, $\tilde\gamma$ starts at some 
$x' \in f^{-k}(x_0) \subseteq \mathcal{X}_k$. 
Choose $y' \in p^{-1}(x') \subseteq \mathcal{Y}_k$, 
and lift $\tilde\gamma$ to a path in $\mathcal{Y}_k$ 
starting at $y'$. Now, $\tilde\eta$ is not necessarily 
a lift of $\eta$ by $g^k$. 
However, if we apply $p$ to $g^k(y')$, we find 
$p(g^k(y')) = f^k(p(y')) = f^k(x') = x_0$, and so there 
exists $\tau \in \mathrm{Gal}(\mathcal{Y}/\mathcal{X})$ 
such that $y_0 = \tau(g^k(y'_0))$, and $\tilde\eta$ is 
a lift of $\eta$ by $\tau\circ g^k$. 
Because $g^k$ is a regular covering map, the lifting 
property implies that there exists a homeomorphism 
$\tau' : \mathcal{Y}_k \to \mathcal{Y}_k$ such that 
$\tau\circ g^k = g^k \circ \tau'$. 
Thus $\tau' \circ \tilde\eta$ is a lift of $\eta$ by $g^k$, 
which means that $\tau' \circ \tilde\eta$ must be a loop. 
Because $\tau'$ is a homeomorphism, $\tilde\eta$ must also 
be a loop, and thus $p \circ \tilde\eta = \tilde\gamma$ 
is a loop. Therefore $[\gamma] = p_*([\eta]) \in \ker\mu_f$.
\end{proof}

It is worth making a couple of remarks on the 
condition $p^{-1}(\mathcal{X}_k) = \mathcal{Y}_k$ 
in the statement of Lemma~\ref{L:kernelincl}. 
First, for a general semiconjugacy 
$p : (\mathcal{Y},g) \to (\mathcal{X},f)$, 
we have only the inclusion 
$\mathcal{Y}_k \subseteq p^{-1}(\mathcal{X}_k)$. 
Second, when $p$ is a regular covering map, the equation 
$p^{-1}(\mathcal{X}_k) = \mathcal{Y}_k$ is equivalent 
to the statement that $\mathrm{Gal}(\mathcal{Y}/\mathcal{X})$ 
preserves $\mathcal{Y}_k$, in the sense that 
$\tau(\mathcal{Y}_k) = \mathcal{Y}_k$ for all 
$\tau \in \mathrm{Gal}(\mathcal{Y}/\mathcal{X})$.

\section{Proof of main theorem}\label{S:computation}

In Section~\ref{S:critical} we saw that all 
Chebyshev-like maps $T_{\Phi,d}$ are post-critically 
finite. Thus they have iterated monodromy groups, 
which we compute in this section.

\begin{theorem}\label{T:imgT}
Let $\Phi$ be a root system with affine Weyl group 
$\til{W}_\Phi$. For any $d \ge 2$, the iterated 
monodromy group of $T_{\Phi,d}$ is isomorphic to 
$\til{W}_\Phi$.
\end{theorem}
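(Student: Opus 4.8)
The plan is to route everything through the functional equation $T_{\Phi,d}\circ\Psi_\Phi=\Psi_\Phi\circ m_d$, which says exactly that $\Psi_\Phi$ is a semiconjugacy from $m_d$ to $T_{\Phi,d}$, and then to feed this into Lemmas~\ref{L:kernelincl} and~\ref{L:diagram} with $g=m_d$. Write $\mathcal{X}=\C^n\setminus\mathcal{D}_\Phi$, $\mathcal{Y}=\C^n\setminus\mathcal{H}_\Phi$, and $p=\Psi_\Phi|_{\mathcal{Y}}$. First I would verify the hypotheses. By Lemma~\ref{L:galoisgroup}, $p$ is a regular covering with $\mathrm{Gal}(\mathcal{Y}/\mathcal{X})=\til{W}_\Phi$; from $\Psi_\Phi(\mathcal{Y})=\mathcal{X}$ and $\Psi_\Phi(\mathcal{H}_\Phi)=\mathcal{D}_\Phi$ it follows that $\Psi_\Phi$ is surjective and $\Psi_\Phi^{-1}(\mathcal{D}_\Phi)=\mathcal{H}_\Phi$, and pushing this through the functional equation $k$ times gives $\Psi_\Phi^{-1}(\mathcal{X}_k)=\mathcal{Y}_k$ for the domains $\mathcal{X}_k=T_{\Phi,d}^{-k}(\mathcal{X})$ and $\mathcal{Y}_k=m_d^{-k}(\mathcal{Y})$ of the iterates. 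Since $\mathcal{H}_\Phi$ is $m_d$-invariant, $\mathcal{Y}_k\subseteq\mathcal{Y}$, so $m_d^k=m_{d^k}$ restricts to a \emph{homeomorphism} $\mathcal{Y}_k\to\mathcal{Y}$; in particular every iterate of $m_d$ is a (degree-one) regular covering, $\mathbf{T}_{m_d}$ is a ray, and $\mathrm{IMG}(m_d)$ is trivial. Hence Lemma~\ref{L:kernelincl} applies and yields $p_*\pi_1(\mathcal{Y})=p_*(\ker\mu_{m_d})\subseteq\ker\mu_{T_{\Phi,d}}$.

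Lemma~\ref{L:diagram} then produces the commutative diagram~\eqref{Eq:cd}, whose rightmost column, since $\mathrm{IMG}(m_d)$ is trivial, reduces to the short exact sequence $0\to\ker\mu_{T_{\Phi,d}}/p_*\pi_1(\mathcal{Y})\to\til{W}_\Phi\to\mathrm{IMG}(T_{\Phi,d})\to0$. So $\mathrm{IMG}(T_{\Phi,d})$ is already known to be a quotient of $\til{W}_\Phi$, and the theorem reduces to showing that the monodromy action of $\til{W}_\Phi$ on the preimage tree $\mathbf{T}_{T_{\Phi,d}}$ is faithful.

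For this I would make $\mathbf{T}_{T_{\Phi,d}}$ explicit. Choose a basepoint $y_0\in\bigcap_k\mathcal{Y}_k$ (only countably many hyperplanes are avoided, and $\mathrm{IMG}$ is basepoint-independent) and put $x_0=\Psi_\Phi(y_0)$. The fiber $\Psi_\Phi^{-1}(x_0)$ is a single \emph{free} $\til{W}_\Phi$-orbit, hence (using $y_0$ as reference point) a copy of $\til{W}_\Phi$, and $T_{\Phi,d}^{-k}(x_0)=\Psi_\Phi(\Lambda_k)$ where $\Lambda_k:=m_d^{-k}\Psi_\Phi^{-1}(x_0)$. Rescaling by $d^k$, the ambient $\til{W}_\Phi$-action on $\Lambda_k\cong\til{W}_\Phi$ is left translation precomposed with the injective endomorphism $\phi_k:\til{W}_\Phi\hookrightarrow\til{W}_\Phi$, $(\lambda,w)\mapsto(d^k\lambda,w)$ (whose image is $d^kQ_\Phi^\vee\rtimes W_\Phi$); hence the $k$th level of $\mathbf{T}_{T_{\Phi,d}}$ is identified with the coset space $\phi_k(\til{W}_\Phi)\backslash\til{W}_\Phi$, the downward edges coming from the inclusions $\phi_k(\til{W}_\Phi)\subseteq\phi_{k-1}(\til{W}_\Phi)$. (Each level then carries $[\,Q_\Phi^\vee:d^kQ_\Phi^\vee\,]=d^{kn}$ vertices, matching the degree $d^n$ of $T_{\Phi,d}$; this coset tower is the self-similar $d^n$-ary tree mentioned in the Introduction.) Lifting a loop $\gamma$ based at $x_0$ through the composite covering $\Psi_\Phi\circ m_{d^k}$---which is regular with deck group $\til{W}_\Phi$, because $m_{d^k}$ is a homeomorphism---one finds that the monodromy of $[\gamma]$ acts on this level as the natural action of $\til{W}_\Phi$ on $\phi_k(\til{W}_\Phi)\backslash\til{W}_\Phi$ through the element $\tau\in\til{W}_\Phi$ recording the endpoint of the $p$-lift of $\gamma$ from $y_0$, that is, the image of $[\gamma]$ under the canonical projection $\pi_1(\mathcal{X})\to\mathrm{Gal}(\mathcal{Y}/\mathcal{X})=\til{W}_\Phi$.

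Granting this, $\tau\in\til{W}_\Phi$ acts trivially on $\mathbf{T}_{T_{\Phi,d}}$ precisely when $\tau$ lies in the normal core of $\phi_k(\til{W}_\Phi)$ in $\til{W}_\Phi$ for every $k\ge1$. Since $\bigcap_{k\ge1}d^kQ_\Phi^\vee=\{0\}$, such a $\tau$ must already lie in $W_\Phi$; and if $u\in W_\Phi$ is nontrivial then $\id-u$ is a nonzero linear map, so $(\id-u)\nu\ne0$ for some $\nu$ in the full-rank coroot lattice $Q_\Phi^\vee$, and conjugating $u$ by the translation $x\mapsto x+\nu$ yields an element of $\til{W}_\Phi$ whose translation part is $(\id-u)\nu\in Q_\Phi^\vee\setminus d^kQ_\Phi^\vee$ once $k$ is large---so $u$ leaves the core of $\phi_k(\til{W}_\Phi)$. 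Hence only the identity of $\til{W}_\Phi$ acts trivially, the action is faithful, and $\mathrm{IMG}(T_{\Phi,d})\cong\til{W}_\Phi$. Everything through the second paragraph is routine bookkeeping with the lemmas already established; the substantive step---the one I expect to be the main obstacle---is the third paragraph, namely building the coset model of $\mathbf{T}_{T_{\Phi,d}}$ and identifying the monodromy with the natural action of $\til{W}_\Phi$, after which the elementary core computation of the fourth paragraph upgrades ``quotient of $\til{W}_\Phi$'' to ``isomorphic to $\til{W}_\Phi$''.
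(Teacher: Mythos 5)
Your proof is correct, and while it shares the paper's scaffolding (the semiconjugacy $T_{\Phi,d}\circ\Psi_\Phi=\Psi_\Phi\circ m_d$, the verification of the hypotheses of Lemma~\ref{L:kernelincl}, the triviality of $\mathrm{IMG}(m_d)$, and the diagram \eqref{Eq:cd}), the crux is handled by a genuinely different argument. The paper's key step is Lemma~\ref{L:kernelsurj}: given $[\gamma]\in\ker\mu_{T_{\Phi,d}}$, it lifts $\gamma$ by $\Psi_\Phi$ to a path $\eta$, observes that the endpoints of $\frac{1}{d^k}\eta$ differ by an element of $\til{W}_\Phi$ for every $k$, and concludes by a geometric fundamental-domain argument (translate the endpoints into a Weyl chamber; for large $k$ both endpoints of $\frac{1}{d^k}\eta$ lie in a single alcove) that $\eta$ must be closed; hence $\ker\mu_{T_{\Phi,d}}=(\Psi_\Phi)_*\pi_1(\C^n\setminus\mathcal{H}_\Phi)$ and exactness in \eqref{Eq:cd} finishes. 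You instead build an explicit model of the preimage tree---level $k$ is the coset space $\phi_k(\til{W}_\Phi)\backslash\til{W}_\Phi$ with $\phi_k(\til{W}_\Phi)=d^kQ_\Phi^\vee\rtimes W_\Phi$, and the monodromy of $[\gamma]$ is right translation by its Galois image $\tau$---and reduce faithfulness to the triviality of the intersection over $k$ of the normal cores of $d^kQ_\Phi^\vee\rtimes W_\Phi$ in $\til{W}_\Phi$, which you verify by an elementary computation ($\bigcap_k d^kQ_\Phi^\vee=\{0\}$ forces the translation part to vanish, and the conjugation trick with $(\id-u)\nu$ eliminates any nontrivial linear part $u\in W_\Phi$). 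The two arguments encode the same contraction phenomenon, one algebraically and one geometrically; yours has the added benefit of exhibiting $\mathrm{IMG}(T_{\Phi,d})$ concretely as a self-similar action on a coset tree with $d^{kn}$ vertices at level $k$ (which the paper only asserts abstractly in its closing corollary), at the cost of the bookkeeping needed to identify the level-$k$ monodromy with the coset action. One small remark: once your third paragraph is in place, the appeal to Lemma~\ref{L:kernelincl} is redundant, since the factorization of the monodromy through $\tau$ already yields $(\Psi_\Phi)_*\pi_1(\C^n\setminus\mathcal{H}_\Phi)\subseteq\ker\mu_{T_{\Phi,d}}$.
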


Before completing the proof of Theorem \ref{T:imgT}, 
we make one more general observation.

\begin{lemma}\label{L:trivialIMG}
Let $(\mathcal{X},f)$ be a partial self-covering. 
If $f$ is injective, then $\mathrm{IMG}(f) = 0$.
\end{lemma}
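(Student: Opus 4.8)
The plan is to show directly that the monodromy homomorphism $\mu_f$ is trivial, by arguing that every loop in $\mathcal{X}$ lifts to a loop under every iterate $f^k$. The key point is that if $f : \mathcal{X}_1 \to \mathcal{X}$ is an injective covering map, then $f$ is a homeomorphism onto its image $\mathcal{X}$ (a covering map that is injective has degree $1$), so each $f^k : \mathcal{X}_k \to \mathcal{X}$ is also injective, hence a degree-$1$ covering.

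First I would note that a degree-$1$ covering map $p : \mathcal{A} \to \mathcal{B}$ of path-connected spaces is a homeomorphism: it is a continuous open bijection. Applying this to $f$, and then inductively to each composition $f^k = f \circ f^{k-1}$ restricted to $\mathcal{X}_k = f^{-k}(\mathcal{X})$, we conclude that every $f^k$ is a homeomorphism onto $\mathcal{X}$. In particular, the fiber $f^{-k}(x_0)$ is a single point for every $k$, so the tree of preimages $\mx{T}_f$ is a rooted $1$-ary tree — an infinite path — and $\Aut(\mx{T}_f)$ is trivial. Hence $\mathrm{IMG}(f) = \mu_f(\pi_1(\mathcal{X},x_0))$ is a subgroup of the trivial group, so $\mathrm{IMG}(f) = 0$.

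Alternatively, if one prefers not to invoke $\Aut(\mx{T}_f) = 0$ directly, one can use the criterion recalled just before Lemma~\ref{L:diagram}: $[\gamma] \in \ker\mu_f$ if and only if every lift of $\gamma$ by every iterate of $f$ is a loop. Since $f^k$ is a homeomorphism, the unique lift of any loop $\gamma$ based at $x_0$ is $(f^k)^{-1}\circ\gamma$, which is a loop based at the unique point of $f^{-k}(x_0)$. Thus $\ker\mu_f = \pi_1(\mathcal{X},x_0)$ and $\mathrm{IMG}(f) = 0$.

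There is no real obstacle here; the only thing to be careful about is the standard fact that an injective covering map of path-connected spaces is a homeomorphism (equivalently, has degree $1$), which is where the hypothesis that $\mathcal{X}$ and $\mathcal{X}_1$ are path-connected — built into the definition of a partial self-covering — gets used. Everything else is immediate from the definitions.
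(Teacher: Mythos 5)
Your proof is correct and follows the paper's own argument almost exactly: the paper likewise observes that an injective $f$ is a homeomorphism, so every lift of every loop is a loop (hence $\ker\mu_f = \pi_1(\mathcal{X})$), and offers as an alternative the same observation you make that each level of $\mx{T}_f$ has a single vertex. The only difference is that you spell out the standard fact that an injective covering of path-connected spaces has degree $1$, which the paper takes for granted.
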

\begin{proof}
If $f$ is injective, then it is a homeomorphism. 
In this case, any lift of any loop by any iterate of $f$ 
remains a loop, and therefore all of $\pi_1(\mathcal{X})$ 
lies in the kernel of $\mu_f$. 

Alternatively, observe that when $f$ is injective, 
every level of $\mathbf{T}_f$ has only one vertex, and therefore 
$\pi_1(\mathcal{X})$ must act trivially at every level.
\end{proof}

We can apply Lemma~\ref{L:trivialIMG} to the 
partial self-covering $(\C^n\setminus\mathcal{H}_\Phi,m_d)$, 
because $m_d$ is evidently injective. Thus we have 
$\ker\mu_{m_d} = \pi_1(\C^n\setminus\mathcal{H}_\Phi)$.
The following lemma is now the primary piece that remains 
to be established.

\begin{lemma}\label{L:kernelsurj}
Let $\Phi$ be a root system, and let $\Psi_\Phi$ be its 
associated generalized cosine. Given $d \ge 2$, let 
$T_{\Phi,d}$ be the associated Chebyshev-like map. Then 
$(\Psi_\Phi)_*$ is a surjective map from 
$\pi_1(\C^n \setminus \mathcal{H}_\Phi)$ to 
$\ker\mu_{T_{\Phi,d}}$.
\end{lemma}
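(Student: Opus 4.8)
The plan is to exploit the commutative diagram of Lemma~\ref{L:diagram} applied to the semiconjugacy $\Psi_\Phi : (\mathcal{Y},m_d) \to (\mathcal{X},T_{\Phi,d})$, where $\mathcal{Y} = \C^n\setminus\mathcal{H}_\Phi$ and $\mathcal{X} = \C^n\setminus\mathcal{D}_\Phi$. Equation~\eqref{E:chebysheveq} says precisely that $\Psi_\Phi$ is a semiconjugacy, and Lemma~\ref{L:galoisgroup} says it is a regular covering with deck group $\til{W}_\Phi$; the remaining hypotheses of Lemma~\ref{L:kernelincl} need to be checked so that $(\Psi_\Phi)_*(\ker\mu_{m_d}) \subseteq \ker\mu_{T_{\Phi,d}}$ and the diagram is available. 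First I would verify that $m_d^k = m_{d^k}$ is a regular covering (trivially, since it is a homeomorphism, so its deck group is trivial and acts transitively on the one-point fibers), and that $\Psi_\Phi^{-1}(\mathcal{X}_k) = \mathcal{Y}_k$ for all $k$. By the second remark following Lemma~\ref{L:kernelincl}, this last equality is equivalent to $\til{W}_\Phi$ preserving each $\mathcal{Y}_k = m_d^{-k}(\mathcal{Y})$; but $m_d^{-k}(\mathcal{Y}) = \C^n \setminus m_d^{-k}(\mathcal{H}_\Phi)$, and $m_d^{-k}(\mathcal{H}_\Phi)$ is a union of hyperplanes $H_{\mx{v},\ell/d^k}$ with $\mx{v}\in\Phi$, $\ell\in\Z$, which is manifestly invariant under all $\rho_{\mx{v}',\ell'}$ with $\ell'\in\Z$ (an integer translation of a rational-level hyperplane arrangement of this form returns an arrangement of the same form). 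So $\til{W}_\Phi$ preserves $\mathcal{Y}_k$, and Lemma~\ref{L:kernelincl} applies.

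With the diagram~\eqref{Eq:cd} in hand, the two middle rows and the two leftmost columns give what is needed. The middle column of~\eqref{Eq:cd} is the short exact sequence
\[
0 \longrightarrow \pi_1(\mathcal{Y},y_0) \xrightarrow{\ (\Psi_\Phi)_*\ } \pi_1(\mathcal{X},x_0) \xrightarrow{\ \mu_{T_{\Phi,d}}\ } \mathrm{IMG}(T_{\Phi,d}) \longrightarrow 0
\]
is not literally a column of the diagram, but composing the vertical maps, we get that $\ker\mu_{T_{\Phi,d}}$ is the image of the composite $\pi_1(\mathcal{Y},y_0)\to\ker\mu_{T_{\Phi,d}}$ obtained by restricting $(\Psi_\Phi)_*$; concretely, the left column of~\eqref{Eq:cd} reads
\[
0 \longrightarrow \ker\mu_{m_d} \longrightarrow \pi_1(\mathcal{Y},y_0) \xrightarrow{\ \mu_{m_d}\ } \mathrm{IMG}(m_d) \longrightarrow 0,
\]
and since $m_d$ is injective, Lemma~\ref{L:trivialIMG} gives $\mathrm{IMG}(m_d) = 0$, hence $\ker\mu_{m_d} = \pi_1(\mathcal{Y},y_0)$. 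Now the top row of~\eqref{Eq:cd} is
\[
0 \longrightarrow \ker\mu_{m_d} \longrightarrow \ker\mu_{T_{\Phi,d}} \longrightarrow \ker\mu_{T_{\Phi,d}}/\ker\mu_{m_d} \longrightarrow 0,
\]
and the map $\ker\mu_{m_d}\to\ker\mu_{T_{\Phi,d}}$ is the restriction of $(\Psi_\Phi)_*$. Since $\ker\mu_{m_d} = \pi_1(\mathcal{Y},y_0) = \pi_1(\C^n\setminus\mathcal{H}_\Phi)$, to conclude surjectivity of $(\Psi_\Phi)_* : \pi_1(\C^n\setminus\mathcal{H}_\Phi) \to \ker\mu_{T_{\Phi,d}}$ it suffices to show the cokernel $\ker\mu_{T_{\Phi,d}}/\ker\mu_{m_d}$ vanishes.

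By the top right of the diagram, that cokernel is isomorphic to the kernel of the induced map $\mathrm{Gal}(\mathcal{Y}/\mathcal{X}) \to \mathrm{IMG}(T_{\Phi,d})/\mathrm{IMG}(m_d) = \mathrm{IMG}(T_{\Phi,d})$; since $\mathrm{Gal}(\mathcal{Y}/\mathcal{X}) = \til{W}_\Phi$ by Lemma~\ref{L:galoisgroup}, I need to show that the composite $\til{W}_\Phi \to \pi_1(\mathcal{X},x_0)/(\Psi_\Phi)_*\pi_1(\mathcal{Y},y_0) \to \mathrm{IMG}(T_{\Phi,d})$ is injective — equivalently, that no nontrivial element of $\til{W}_\Phi$, viewed inside $\pi_1(\mathcal{X})$ modulo $\mathrm{im}(\Psi_\Phi)_*$, lies in $\ker\mu_{T_{\Phi,d}}$. \textbf{This is the main obstacle.} The way to handle it is to produce, for each of the standard generating reflections $\rho_{\alpha_i,0}$ and $\rho_{\theta,1}$ (simple roots plus the affine node) of $\til{W}_\Phi$, an explicit loop in $\mathcal{X}$ whose class maps to that reflection in $\mathrm{Gal}$, and then exhibit a lift by some iterate $T_{\Phi,d}^k$ that fails to close up — showing its image under $\mu_{T_{\Phi,d}}$ is a nontrivial tree automorphism. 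A clean route, following the $A_1$ template in the Chebyshev example, is to choose $x_0 = \Psi_\Phi(\mx{x}_0)$ with $\mx{x}_0$ generic real, take a small loop in $\mathcal{X}$ that lifts under $\Psi_\Phi$ to a path in $\mathcal{Y}$ from $\mx{x}_0$ to $\rho_{\alpha_i,0}\mx{x}_0$; its preimages under $T_{\Phi,d}^k$ correspond (via the functional equation and the covering $\Psi_\Phi$) to the points $\Psi_\Phi(\mx{x}_0/d^k + \text{lattice/rotation translates})$, and the monodromy permutes these according to the action of $\rho_{\alpha_i,0}$ conjugated by $m_d^{-k}$, which on the relevant finite set $m_d^{-k}(\mx{x}_0)\bmod \til{W}_\Phi$ is a nontrivial permutation — indeed its order grows with $k$ in the translation directions, exactly as $[\gamma_-][\gamma_+]$ does for $T_d$. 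Since any class mapping trivially under $\mu_{T_{\Phi,d}}$ must have all lifts closed, such a class cannot be a nontrivial element of $\til{W}_\Phi$; hence the composite $\til{W}_\Phi\to\mathrm{IMG}(T_{\Phi,d})$ is injective, the cokernel in the top row vanishes, and $(\Psi_\Phi)_*$ is surjective onto $\ker\mu_{T_{\Phi,d}}$ as claimed.
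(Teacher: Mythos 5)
The first half of your argument — verifying the hypotheses of Lemma~\ref{L:kernelincl} so that $(\Psi_\Phi)_*(\pi_1(\C^n\setminus\mathcal{H}_\Phi))\subseteq\ker\mu_{T_{\Phi,d}}$ and diagram~\eqref{Eq:cd} becomes available, and then using $\mathrm{IMG}(m_d)=0$ to identify $\ker\mu_{m_d}$ with $\pi_1(\C^n\setminus\mathcal{H}_\Phi)$ — matches the paper exactly, and your diagram chase reducing the lemma to the injectivity of the induced map $\til{W}_\Phi=\mathrm{Gal}(\mathcal{Y}/\mathcal{X})\to\mathrm{IMG}(T_{\Phi,d})$ is a valid (if somewhat inverted) reformulation: that injectivity, the vanishing of $\ker\mu_{T_{\Phi,d}}/\ker\mu_{m_d}$, and the surjectivity claimed in the lemma are all equivalent given the rest of the diagram. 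The problem is how you establish the injectivity. You check only that each Coxeter generator $\rho_{\alpha_i,0}$, $\rho_{\theta,1}$ has nontrivial image. That does not prove a group homomorphism is injective: the kernel is an arbitrary normal subgroup $N\trianglelefteq\til{W}_\Phi$, and affine Weyl groups have many proper nontrivial normal subgroups containing no reflection at all — for instance any $\til{W}_\Phi$-invariant finite-index sublattice of the translation lattice $Q_\Phi^\vee$, such as $d\,Q_\Phi^\vee$ (which is a genuinely dangerous candidate here, since $m_d$ is lurking in the construction). So even a complete verification on generators, or on all conjugates of generators, leaves the claim open; you must show that \emph{every} nontrivial $\til{w}\in\til{W}_\Phi$ survives. (A secondary issue: for a linear generator $\rho_{\alpha_i,0}$ the lift of your loop based at $\Psi_\Phi(\mx{x}_0/d^k)$ \emph{does} close up, since $\rho_{\alpha_i,0}(\mx{x}_0)/d^k=\rho_{\alpha_i,0}(\mx{x}_0/d^k)$ is $\til{W}_\Phi$-equivalent to $\mx{x}_0/d^k$; one must pass to lifts based at other preimages, so even the generator computation needs more care than your sketch indicates.)

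What the paper does instead is run the argument for an arbitrary class. Given $[\gamma]\in\ker\mu_{T_{\Phi,d}}$, lift $\gamma$ by $\Psi_\Phi$ to a path $\eta$ in $\C^n\setminus\mathcal{H}_\Phi$; its endpoints differ by some $\til{w}\in\til{W}_\Phi$, and the functional equation shows that closedness of all level-$k$ lifts forces the endpoints of $\frac{1}{d^k}\eta$ to be $\til{W}_\Phi$-equivalent for every $k$. After translating by an element of $Q_\Phi^\vee$ so that both endpoints lie in a single Weyl chamber, one takes $k$ large enough that both endpoints of $\frac{1}{d^k}\eta$ lie in a single alcove of the Cartan--Stiefel decomposition; two $\til{W}_\Phi$-equivalent points of $\C^n\setminus\mathcal{H}_\Phi$ in one fundamental domain must coincide, so $\eta$ is a loop and $[\gamma]=(\Psi_\Phi)_*[\eta]$. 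This uniform geometric shrinking argument is exactly the ingredient your proposal is missing; without it (or an equivalent statement valid for all of $\til{W}_\Phi$, not just its generators), the injectivity — and hence the surjectivity in the lemma — is not established.
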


\begin{proof}
Let $\mathcal{Y} = \C^n \setminus \mathcal{H}_\Phi$, 
and choose a point $y_0 \in \R^n \cap \mathcal{Y}$. 
Set $x_0 = \Psi_\Phi(y_0)$.

First we check that the conditions of 
Lemma~\ref{L:kernelincl} are met, in order to see 
that $(\Psi_\Phi)_*(\pi_1(\mathcal{Y},y_0)) 
\subseteq \ker\mu_{T_\Phi,d}$. 
Because $g = m_d$ is a homeomorphism, $g^k$ is a 
regular covering for all $k$. 
The restriction of $\Psi_\Phi$ to $\mathcal{Y}$ 
is a regular covering (by Lemma~\ref{L:galoisgroup}), 
and so it suffices to check that 
$\mathcal{Y}_k = \C^n \setminus \frac{1}{d^k}\mathcal{H}_\Phi$ 
is invariant under $\mathrm{Gal}(\mathcal{Y}/\mathcal{X}) = 
\til{W}_\Phi$, which is true because $\frac{1}{d^k}\mathcal{H}_\Phi$ 
is even invariant under $\frac{1}{d^k}\til{W}_\Phi$, which 
contains $\til{W}_\Phi$.

Given $[\gamma] \in \pi_1(\C^n\setminus\mathcal{D}_\Phi,x_0)$, 
let $\eta$ be a lift of $\gamma$ by $\Psi_\Phi$ to a path 
in $\mathcal{Y}$. Then the endpoints of $\eta$ join points 
that differ by an element of $\til{W}_\Phi$. Suppose that 
$\til\gamma$ is a lift of $\gamma$ by $(T_{\Phi,d})^k$. 
Lift $\til\gamma$ to a path $\til\eta$ by $\Psi_\Phi$. 
Using the relation 
$(T_{\Phi,d})^k \circ \Psi_\Phi = \Psi_\Phi \circ m_{d^k}$, 
we see that $\til\eta = \frac{1}{d^k}\eta$, up to 
an element of $\til{W}_\Phi$. If $\til\gamma$ is also 
a loop, then the endpoints of $\til\eta$ must again 
differ by an element of $\til{W}_\Phi$. Thus, if 
$[\gamma] \in \ker\mu_{T_{\Phi,d}}$, it must be true 
that, for all $k$, the path $\frac{1}{d^k}\eta$ joins 
points that differ by some element of $\til{W}_\Phi$. 
We wish to show that this condition implies that 
$\eta$ is a closed loop.

Let $Q_\Phi^\vee \subset \R^n$ be the lattice 
generated by the coroots of $\Phi$. For each 
$a \in Q_\Phi^\vee$, the path $a + \eta$ also projects 
to $\gamma$. We can choose an element of $Q_\Phi^\vee$ 
that sends the endpoints of $\eta$ to a single Weyl 
chamber of $\Phi$. (For instance, we can assume that 
the endpoints of $\eta$ are linear combinations of 
simple roots with positive coefficients.) The elements 
of $\mathcal{H}_\Phi$ partition this Weyl chamber into 
regions of finite area, each of which is a fundamental 
domain for $\Psi_\Phi(\R^n)$. Now we can find $k$ 
sufficiently large that both endpoints of 
$\frac{1}{d^k}\eta$ are in a single fundamental domain. 
This is impossible unless $\eta$ is a loop, which proves 
the result.
\end{proof}

\begin{proof}[Proof of Theorem \ref{T:imgT}]
Consider the diagram \eqref{Eq:cd}, with 
$\mathcal{X} = \C^n \setminus \mathcal{D}_\Phi$, 
$\mathcal{Y} = \C^n \setminus \mathcal{H}_\Phi$, 
$f = T_{\Phi,d}$, $g = m_d$, and $p = \Psi_{\Phi}$. 
By Lemma~\ref{L:trivialIMG}, $\mathrm{IMG}(m_d)$ is trivial, 
which implies that $\ker \mu_{m_d} = \pi_1(\mathcal{Y},y_0)$, 
and also that $\mathrm{IMG}(T_{\Phi,d})/\mathrm{IMG}(m_d) 
= \mathrm{IMG}(T_{\Phi,d})$. 
Then Lemma~\ref{L:kernelsurj} implies that 
$(\Psi_\Phi)_* : \ker\mu_{m_d} \to \ker\mu_{T_{\Phi,d}}$ 
is an isomorphism, so 
$\ker\mu_{T_{\Phi,d}}/\ker\mu_{m_d} = 0$. The exactness 
of the rows and columns of \eqref{Eq:cd} now shows that 
$\mathrm{IMG}(T_{\Phi,d}) 
\cong \mathrm{Gal}(\mathcal{Y}/\mathcal{X})$, 
which by Lemma~\ref{L:galoisgroup} is precisely 
the affine Weyl group of $\Phi$.
\end{proof}

Finally, we state a consequence for the structure of 
affine Weyl groups, for which we need one more set of 
definitions (cf. \cite{sG12,vN-img,vN-ssg}).

\begin{definition}[$\delta$-ary tree, self-similar group]
Given a positive integer $\delta$, the 
\emph{$\delta$-ary tree} is the graph $\mathbf{T}_\delta$ 
whose vertex set consists of all finite words in the alphabet 
$[\delta] = \{1,2,\dots,\delta\}$, with an edge between each 
pair of vertices $w$ and $wk$, where $k \in [\delta]$. 
The \emph{root} of $\mathbf{T}_\delta$ is the empty word 
$\varnothing$. 
For each $k \in [\delta]$, the subtree 
$\mathbf{T}_{\delta,k}$ is the induced graph on the 
set of vertices that begin with $k$. The map 
$\sigma_k : w \mapsto kw$ is an isomorphism from 
$\mathbf{T}_\delta$ to $\mathbf{T}_{\delta,k}$. 
Given an automorphism $g$ of $\mathbf{T}_\delta$ and 
$k \in [\delta]$, the \emph{renormalization 
of $g$ at $k$} is the induced automorphism $g_k$ of 
$\mathbf{T}_\delta$ defined by 
$g_k = \sigma_{g(k)}^{-1} \circ g \circ \sigma_k$. 
We say that a group $G$ of automorphisms of 
$\mathbf{T}_\delta$ is \emph{self-similar} if 
$g_k \in G$ for all $g \in G$ and for all 
$k \in [\delta]$. 
\end{definition}

If $(\mathcal{X},f)$ is any partial self-covering having 
topological degree $\delta$, then the tree of preimages 
$\mathbf{T}_f$ can be identified with $\mathcal{T}_\delta$ 
in a canonical (but non-unique) way, and under this 
identification $\mathrm{IMG}(f)$ is a self-similar group 
acting faithfully on $\mathcal{T}_\delta$. The construction 
of the Chebyshev-like map $T_{\Phi,d}$ from a root system 
of rank $n$ implies that the topological degree of 
$T_{\Phi,d}$ is $d^n$, which leads to the following result.

\begin{corollary}
Let $\Phi$ be a root system of rank $n$. Then, 
for any $d \ge 2$, $\til{W}_\Phi$ acts faithfully 
as a self-similar group on the $d^n$-ary tree as 
the iterated monodromy group $\mathrm{IMG}(T_{\Phi,d})$.
\end{corollary}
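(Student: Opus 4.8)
The plan is to prove the corollary by a direct appeal to the machinery just assembled, so the task reduces to two bookkeeping matters: identifying the topological degree of $T_{\Phi,d}$, and invoking the general fact that iterated monodromy groups act faithfully and self-similarly. First I would establish that the topological degree of $T_{\Phi,d}$ equals $d^n$. The cleanest route is through the functional equation \eqref{E:chebysheveq}: since $\Psi_\Phi$ is a $|\til W_\Phi / Q_\Phi^\vee|$-valued\,---\,rather, a $W_\Phi$-to-one branched cover onto its image after quotienting by $Q_\Phi^\vee$\,---\,one compares generic fiber cardinalities. Concretely, $\Psi_\Phi \circ m_d = T_{\Phi,d} \circ \Psi_\Phi$, and over a generic point of $\C^n$ the left-hand composite has a fiber of size $d^n$ times the generic fiber of $\Psi_\Phi$ (because $m_d$ has degree $d^n$ as a self-map of the torus $\C^n/Q_\Phi^\vee$, or simply because $m_d$ is $d^n$-to-one on the lattice quotient), while the right-hand composite has a fiber of size $(\deg T_{\Phi,d})$ times the generic fiber of $\Psi_\Phi$; hence $\deg T_{\Phi,d} = d^n$. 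This is exactly the degree count the excerpt asserts in the paragraph preceding the corollary, so in the written proof I would state it briefly and cite that the restriction of $T_{\Phi,d}$ to $\C^n \setminus (\mathcal C \cup \mathcal D_\Phi)$ is a $d^n$-sheeted covering of $\C^n \setminus \mathcal D_\Phi$, which follows from the post-critical finiteness established in Section~\ref{S:critical}.

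Next I would invoke the general principle, recalled in the paragraph before the corollary, that for any partial self-covering $(\mathcal X, f)$ of topological degree $\delta$, the tree of preimages $\mathbf T_f$ is (non-canonically) isomorphic to the abstract $\delta$-ary tree $\mathbf T_\delta$, and under any such identification $\mathrm{IMG}(f)$ becomes a self-similar subgroup of $\Aut(\mathbf T_\delta)$ acting faithfully by construction (faithfulness is immediate, since $\mathrm{IMG}(f)$ is by definition the image of the monodromy homomorphism $\mu_f$, hence a subgroup of $\Aut(\mathbf T_f)$). Applying this with $f = T_{\Phi,d}$ and $\delta = d^n$, we get that $\mathrm{IMG}(T_{\Phi,d})$ acts faithfully and self-similarly on the $d^n$-ary tree. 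Finally, Theorem~\ref{T:imgT} identifies this group with $\til W_\Phi$, and the corollary follows by transporting the action along that isomorphism.

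The main obstacle, such as it is, is purely expository: the self-similarity statement is folklore in the Nekrashevych theory but the excerpt only gestures at it, so I would want to say a sentence about why renormalizations $g_k = \sigma_{g(k)}^{-1} \circ g \circ \sigma_k$ of an element $g = \mu_f([\gamma])$ again lie in $\mathrm{IMG}(f)$. The standard argument: choose a tree isomorphism $\mathbf T_f \cong \mathbf T_\delta$ compatible with $f$ (so that $f$ corresponds to deleting the first letter), and for each $k$ pick a path $\ell_k$ in $\mathcal X$ from $x_0$ to the $k$th preimage $f^{-1}(x_0)_k$; then the renormalization of $\mu_f([\gamma])$ at $k$ equals $\mu_f([\ell_k^{-1} \cdot \tilde\gamma_k \cdot \ell_{g(k)}])$ where $\tilde\gamma_k$ is the appropriate lift, which is manifestly in $\mathrm{IMG}(f)$. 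I would present this compactly, citing \cite{vN-img,vN-ssg} for the full treatment, rather than reproving it in detail. The rest is a two-line deduction, so the corollary should occupy only a short paragraph.
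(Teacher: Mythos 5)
Your proposal is correct and follows exactly the route the paper takes: the paragraph preceding the corollary already records that $\mathrm{IMG}(f)$ of a degree-$\delta$ partial self-covering acts faithfully and self-similarly on the $\delta$-ary tree and that $\deg T_{\Phi,d}=d^n$, after which the corollary is immediate from Theorem~\ref{T:imgT}. Your added sketch of the degree count via the functional equation and of why renormalizations stay in $\mathrm{IMG}(f)$ merely fills in standard details the paper leaves to the references.
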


\subsection*{Acknowledgments}

I thank Jim Belk and Roland Roeder for helpful 
conversations. I am also grateful to an anonymous 
referee for pointing out an error in an earlier 
version of Lemma~\ref{L:diagram} and its proof.

\end{document}